\providecommand{\U}[1]{\protect\rule{.1in}{.1in}}
\providecommand{\U}[1]{\protect\rule{.1in}{.1in}}
\newtheorem{theorem}{Theorem}
\newtheorem{definition}[theorem]{Definition}
\newtheorem{lemma}[theorem]{Lemma}
\newtheorem{proposition}[theorem]{Proposition}
\newenvironment{proof}[1][Proof]{\noindent\textbf{#1.} }{\ \rule{0.5em}{0.5em}}
\begin{document}

\author{Ognyan Kounchev
\and Hermann Render
\and Tsvetomir Tsachev}
\title{Error estimates for harmonic and biharmonic interpolation splines with annular geometry}
\date{}
\maketitle

\begin{abstract}
The main result in this paper is an error estimate for interpolation
biharmonic polysplines in an annulus $A\left(  r_{1},r_{N}\right)  $, with
respect to a partition by concentric annular domains $A\left(  r_{1}%
,r_{2}\right)  ,$ ...., $A\left(  r_{N-1},r_{N}\right)  ,$ for radii
$0<r_{1}<....<r_{N}.$ The biharmonic polysplines interpolate a smooth function
on the spheres $\left\vert x\right\vert =r_{j}$ for $j=1,...,N$ and satisfy
natural boundary conditions for $\left\vert x\right\vert =r_{1}$ and
$\left\vert x\right\vert =r_{N}.$ By analogy with a technique in
one-dimensional spline theory established by C. de Boor, we base our proof on
error estimates for harmonic interpolation splines with respect to the
partition by the annuli $A\left(  r_{j-1},r_{j}\right)  $. For these estimates
it is important to determine the smallest constant $c_{d}\left(
\Omega\right)  ,$ where $\Omega=A\left(  r_{j-1},r_{j}\right)  ,$ among all
constants $c$ satisfying
\[
\sup_{x\in\Omega}\left\vert f\left(  x\right)  \right\vert \leq c\sup
_{x\in\Omega}\left\vert \Delta f\left(  x\right)  \right\vert
\]
for all $f\in C^{2}\left(  \Omega\right)  \cap C\left(  \overline{\Omega
}\right)  $ vanishing on the boundary of the bounded domain $\Omega$ . In this
paper we describe $c_{d}\left(  \Omega\right)  $ for an annulus $\Omega
=A\left(  r,R\right)  $ and we will give the estimate
\[
\min\{\frac{1}{2d},\frac{1}{8}\}\left(  R-r\right)  ^{2}\leq c_{d}\left(
A\left(  r,R\right)  \right)  \leq\max\{\frac{1}{2d},\frac{1}{8}\}\left(
R-r\right)  ^{2}%
\]
where $d$ is the dimension of the underlying space.

\end{abstract}

\textbf{Keywords: } multidimensional splines; harmonic splines; biharmonic
splines; error estimates for interpolation splines; spherical harmonics

\section{Introduction}

Let $C\left(  \Omega\right)  $ be the set of all continuous complex-valued
functions defined on a subset $\Omega$ in $\mathbb{R}^{d}.$ For an open subset
$\Omega,$ let $C^{m}\left(  \Omega\right)  $ be the set of all functions which
have continuous partial derivatives of order $\leq m$ on $\Omega$, and for
$\overline{\Omega}$, the closure of $\Omega$, we denote by $C^{m}\left(
\overline{\Omega}\right)  $ the set of all functions which have continuous
partial derivatives of order $\leq m$ on $\overline{\Omega}.$

Recall that a \emph{spline of degree} $p$ defined on an interval $\left[
t_{1},t_{N}\right]  $ with nodes $t_{1}<...<t_{N}$ is a function $S\in
C^{p-1}\left[  t_{1},t_{N}\right]  $ which on each subinterval $\left(
t_{j},t_{j+1}\right)  $ is identical with some polynomial of degree $\leq p$
for $j=1,...,N-1.$ If $p=3$ we call $S$ a \emph{cubic spline}. For $p=1$ we
obtain the definition of a \emph{linear spline}. Note that $S$ is a linear
spline if and only if $S$ a continuous function on $\left[  t_{1}%
,t_{N}\right]  $ such that on each subinterval $\left(  t_{j},t_{j+1}\right)
$ the function is linear, so a solution of the differential operator
\begin{equation}
\frac{d^{2}}{dt^{2}}S\left(  t\right)  =0. \label{eqsecond}%
\end{equation}

The aim of the present paper is to provide error estimates for interpolation
by special types of multivariate splines, namely \emph{harmonic} and
\emph{biharmonic} splines. Harmonic splines occur in a natural fashion in
mathematical problems, see e.g. the discussion in \cite{Hayman}. Harmonic
splines \emph{for block partitions} have been discussed by various authors in
the literature, see \cite{BaLe13}, \cite{BaLe14}, \cite{BLM10}, \cite{Klim95}.
Here a set $\Omega$ of the form $\left[  a_{1},b_{1}\right]  \times
\cdots\times\left[  a_{d},b_{d}\right]  $ is called a block in $\mathbb{R}%
^{d}$, and a harmonic spline is continuous function on $\Omega$ which is
harmonic on open and disjoint subdomains (called subblocks) $\Omega_{j}$ of
$\Omega$ for $j=1,...,N$ such that the closure of $\Omega_{1}\cup\ldots
\cup\Omega_{N}$ is equal to $\Omega.$ Recall that a function $f:\Omega
\rightarrow\mathbb{C}$ is called \emph{harmonic }if $f\in C^{2}\left(
\Omega\right)  $ and $\ $%
\begin{equation}
\Delta f\left(  x\right)  :=\sum_{j=1}^{d}\frac{\partial^{2}}{\partial
x_{j}^{2}}f\left(  x\right)  =0\text{ for all }x\in\Omega, \label{eqdelta}%
\end{equation}
and $\Delta$ is the Laplace operator. It is convenient to introduce the
following natural generalization: we say that a function $S$ is a
\emph{harmonic spline with respect to the partition} $\Omega_{1}%
,....,\Omega_{N}$ if $S\in C\left(  \overline{\Omega}\right)  $ for
\[
\Omega=\Omega_{1}\cup\ldots\cup\Omega_{N}%
\]
and $S$ is a harmonic function on each subdomain $\Omega_{j}$ for $j=1,...N.$
Thus a harmonic spline for a partition $\Omega_{1},...,\Omega_{N}$ is a
multivariate generalization of a linear spline.

For the partition\emph{ }$\Omega_{1},...,\Omega_{N},$ we define harmonic
spline interpolation in the following way: Given a function $F\in C\left(
\overline{\Omega}\right)  $, we say $I_{2}\left(  F\right)  :\overline{\Omega
}\rightarrow\mathbb{R}$ is a \emph{harmonic spline interpolating the data
function }$F$\emph{ }if $I_{2}\left(  F\right)  $ is a harmonic spline for
$\Omega_{1},...,\Omega_{N}$ and it satisfies the interpolation condition
\begin{equation}
I_{2}\left(  F\right)  \left(  \xi\right)  =F\left(  \xi\right)  \text{ for
all }\xi\in\partial\Omega=\partial\Omega_{1}\cup...\cup\partial\Omega_{N}.
\label{eqintertransfinite}%
\end{equation}
Note that we require in (\ref{eqintertransfinite}) an interpolation condition
for infinitely many points, and in the literature this is often called
\emph{transfinite interpolation}. Transfinite interpolation was already
considered by Gordon and Hall in \cite{GoHa73} (cf. \cite{DJM20}) and this
concept has found many applications in mesh generation, geometric modelling,
finite element methods and spline analysis, see \cite{Bejan11}, \cite{Bejan13}%
, \cite{DyFl09}, \cite{Sabi94}, \cite{Skor18}.

The existence of a harmonic spline interpolant is easy to establish when we
assume that the Dirichlet problem is solvable\footnote{The Dirichlet problem
is solvable for a domain $\Omega$ if for each continuous function $f$ defined
on the boundary $\partial\Omega$ of $\Omega$ there exists a continuous
function $h$ defined on the closure $\overline{\Omega}$ of $\Omega$ which is
harmonic in $\Omega$ and interpolates $f$ on the boundary, i.e. $h\left(
\xi\right)  =f\left(  \xi\right)  $ for all $\xi\in\partial\Omega.$} for each
domain $\Omega_{j}$ for $j=1,...,N.$ The uniqueness of the harmonic
interpolant is a simple consequence of the maximum principle. The main results
in \cite{BLM10}, \cite{BaLe13}, \cite{BaLe14}, \cite{Klim95} are optimal
estimates for the error
\[
E\left(  F\right)  :=F-I_{2}\left(  F\right)
\]
for a given twice differentiable function $F\in C^{2}\left(  \overline{\Omega
}\right)  $ in the case of\emph{ block partitions} with respect to the
supremum norm defined by
\[
\left\Vert f\right\Vert _{\Omega}=\sup_{x\in\Omega}\left\vert f\left(
x\right)  \right\vert
\]
for $f\in C\left(  \Omega\right)  $. In the present paper we shall present an
explicit error estimate for harmonic splines for a partition given by annular
subdomains of the form \emph{ }
\[
\Omega_{j}:=A\left(  r_{j},r_{j+1}\right)  :=\left\{  x\in\mathbb{R}^{d}%
:r_{j}<\left\vert x\right\vert <r_{j+1}\right\}  \text{ }%
\]
for given positive radii $r_{1}<...<r_{N}$. Indeed, we shall proof the following:

\begin{theorem}
\label{ThmA1}Let $r_{1}<....<r_{N}$ be positive numbers, and let $F\in
C^{2}(\overline{A\left(  r_{1},r_{N}\right)  }).$ Assume that $I_{2}\left(
F\right)  $ is the harmonic spline interpolating $F\ $ for all $x$ with
$\left\vert x\right\vert =r_{j}$ and $j=1,...n.$ Then
\begin{equation}
\left\Vert F-I_{2}\left(  F\right)  \right\Vert _{A\left(  r_{1},r_{N}\right)
}\ \leq C_{d}\max_{j=1,...,N-1}\text{ }\left(  r_{j+1}-r_{j}\right)
^{2}\left\Vert \Delta F\right\Vert _{A\left(  r_{1},r_{N}\right)  }\ .
\label{eqHS1}%
\end{equation}
where $d$ is the dimension of the space and $C_{d}=\max\{\frac{1}{2d},\frac
{1}{8}\}.$
\end{theorem}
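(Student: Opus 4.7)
The plan is to localize the problem to each annular subdomain $A(r_j,r_{j+1})$ and then invoke the (to-be-established) upper bound $c_d(A(r,R))\le \max\{1/(2d),1/8\}(R-r)^2$ promised in the abstract. So the proof is essentially a one-step reduction.

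First I would set $E:=F-I_{2}(F)$ and verify the two properties needed to apply the constant $c_d$ on each annulus. For the boundary behaviour, the interpolation condition gives $E(x)=0$ whenever $|x|=r_j$ for some $j\in\{1,\dots,N\}$; in particular, on each fixed annulus $\Omega_j=A(r_j,r_{j+1})$ the restriction $E|_{\overline{\Omega_j}}$ lies in $C^2(\Omega_j)\cap C(\overline{\Omega_j})$ and vanishes on the whole of $\partial\Omega_j=\{|x|=r_j\}\cup\{|x|=r_{j+1}\}$. For the Laplacian, since $I_2(F)$ is harmonic on the open annulus $\Omega_j$, we have $\Delta E=\Delta F$ on $\Omega_j$. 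Note that the regularity $F\in C^2(\overline{A(r_1,r_N)})$ is exactly what is needed to make $\Delta F$ continuous up to the boundary and to guarantee that $E$ is admissible in the definition of $c_d(\Omega_j)$.

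Once these are checked, I would apply the definition of $c_d(\Omega_j)$ from the abstract to $E$ on each $\Omega_j$, obtaining
\[
\|E\|_{\Omega_j}\ \le\ c_d(\Omega_j)\,\|\Delta E\|_{\Omega_j}\ =\ c_d(A(r_j,r_{j+1}))\,\|\Delta F\|_{\Omega_j}.
\]
Using the upper bound $c_d(A(r_j,r_{j+1}))\le \max\{1/(2d),1/8\}(r_{j+1}-r_j)^2=C_d(r_{j+1}-r_j)^2$ and enlarging the right-hand norm to $\|\Delta F\|_{A(r_1,r_N)}$ gives
\[
\|E\|_{\Omega_j}\ \le\ C_d(r_{j+1}-r_j)^2\,\|\Delta F\|_{A(r_1,r_N)}\ \le\ C_d\max_{k}(r_{k+1}-r_k)^2\,\|\Delta F\|_{A(r_1,r_N)}.
\]
Finally I would take the maximum over $j=1,\dots,N-1$ on the left and note that $A(r_1,r_N)$ is, up to the spheres $|x|=r_j$ (a set of measure zero on which $E\equiv0$), the disjoint union of the $\Omega_j$, so $\|E\|_{A(r_1,r_N)}=\max_j\|E\|_{\Omega_j}$; this yields \eqref{eqHS1}.

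The only genuine difficulty in this argument is the upper bound on $c_d(A(r,R))$, which is not a localization matter and has to be proved separately (presumably by a spherical harmonic expansion reducing the PDE estimate to a family of one-dimensional ODE estimates on $[r,R]$, each giving either a factor $1/(2d)$ or $1/8$). Everything else is a direct assembly: domain decomposition, application of the $c_d$-inequality on each annulus, and taking maxima. I would pay minor attention only to verifying that $E$ indeed attains its supremum on some single $\overline{\Omega_j}$ and that the regularity hypothesis on $F$ transfers cleanly to each subdomain.
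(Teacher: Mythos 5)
Your localization argument is correct and is essentially identical to the paper's own reduction (Theorem \ref{ThmMAXGeneral} specialized to annuli in Theorem \ref{ThmMainHarmSpAnn}): set $E=F-I_{2}(F)$, observe that on each $\Omega_{j}=A(r_{j},r_{j+1})$ the function $E$ lies in $C^{2}(\Omega_{j})\cap C(\overline{\Omega_{j}})$, vanishes on all of $\partial\Omega_{j}$, and satisfies $\Delta E=\Delta F$ there; then apply the defining inequality (\ref{eqSmallesta}) of $c_{d}(\Omega_{j})$ and take maxima over $j$. That assembly is sound, including the observation that the supremum over $A(r_{1},r_{N})$ is the maximum of the suprema over the $\Omega_{j}$.

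The genuine gap is the one you flag yourself: the entire quantitative content of the theorem is the bound $c_{d}(A(r,R))\le\max\{\frac{1}{2d},\frac{1}{8}\}(R-r)^{2}$, which you import from the abstract rather than prove, and the paper explicitly identifies this as the most difficult part of Theorem \ref{ThmA1}. Moreover, the route you guess at (a spherical harmonic expansion reducing the estimate to one-dimensional ODE bounds on $[r,R]$) is not how it is done, and would be ill-suited to a sup-norm estimate. The paper instead shows (Theorem \ref{Remark1}, via the Green function representation $f(x)=a_{d}\int_{\Omega}G_{\Omega}(x,y)\Delta f(y)\,dy$) that $c_{d}(\Omega)=\sup_{x\in\Omega}T_{0}(x)$, where $T_{0}=\frac{1}{2d}\left(h(x)-|x|^{2}\right)$ is the torsion function, the unique function vanishing on $\partial\Omega$ with $\Delta T_{0}=-1$. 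For the annulus $T_{0}$ is an explicit radial function whose maximum is computed in closed form (Theorem \ref{ThmAnnulus}), giving $c_{d}(A(r,R))=\frac{(R-r)^{2}}{2d}H_{d}(r/R)$; the stated bounds then follow from a (nontrivial) monotonicity analysis of $H_{d}$ in $\rho=r/R$ --- decreasing for $d=2,3$, constant for $d=4$, increasing for $d>4$ --- whose endpoint values $H_{d}(0)=1$ and $\lim_{\rho\to1}H_{d}(\rho)=\frac{d}{4}$ produce exactly the two competing constants $\frac{1}{2d}$ and $\frac{1}{8}$. Without some version of this computation your argument establishes only the reduction to $\max_{j}c_{d}(\Omega_{j})$, not the explicit constant $C_{d}$ in (\ref{eqHS1}).
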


Note that this result is very similar to the error estimate for linear splines
(see e.g. \cite[p. 31]{deBoor}): Assume that $S_{1}\left(  F\right)  $ is the
(unique) linear spline interpolating a twice differentiable function
$F:\left[  t_{1},t_{N}\right]  \rightarrow\mathbb{R}$ at the points
$t_{1},...,t_{N}.$ Then
\begin{equation}
\sup_{t\in\left[  t_{1},t_{N}\right]  }\left\vert F\left(  t\right)
-S_{1}\left(  F\right)  \left(  t\right)  \right\vert \ \ \ \ \leq\frac{1}%
{8}\max_{j=1,..N-1}\left\vert t_{j+1}-t_{j}\right\vert ^{2}\cdot\sup
_{t\in\left[  t_{1},t_{N}\right]  }\left\vert F^{\prime\prime}\left(
t\right)  \right\vert . \label{eqClass1}%
\end{equation}
The most difficult part in Theorem \ref{ThmA1} is to establish the explicit
nature of the constant $C_{d}$ in the estimate (\ref{eqHS1}). Let us also
emphasize that Theorem \ref{ThmA1} is an important ingredient to establish an
error $L^{2}$-estimate for interpolation with biharmonic splines -- the next
topic we want to discuss.

The definition of a harmonic spline can be traced back in old sources (see.g.
\cite{Hayman}), and the concept is very intuitive. For the definition of a
biharmonic spline we use the approach given in \cite{kounchevbook} where the
explicit description as a piecewise polyharmonic function is used and which
emphasizes the analogy to the case of univariate cubic splines. For our
purposes it is convenient to use the following definition:

\begin{definition}
Let $\Omega_{1},...,\Omega_{N}$ be open disjoint sets in $\mathbb{R}^{d}$ and
define $\Omega=\Omega_{1}\cup\ldots\cup\Omega_{N}.$ A function $f:\overline
{\Omega}\rightarrow\mathbb{C}$ is a \emph{biharmonic spline for the partition}
$\Omega_{1},...,\Omega_{N}$ if $f\in C^{2}\left(  \overline{\Omega}\right)  $
and the restriction of $f$ to each $\Omega_{j}$ is biharmonic, i.e. $f\in
C^{4}\left(  \Omega_{j}\right)  $ and
\begin{equation}
\Delta^{2}f\left(  x\right)  :=\Delta\circ\Delta f\left(  x\right)  =0
\label{eqbiharmonic}%
\end{equation}
for all $x\in\Omega_{j}$ and for $j=1,....,N$.
\end{definition}

In the definition of a biharmonic spline the matching of the boundary
behaviour of the biharmonic functions defined on $\Omega_{j}$ is simply
expressed by the requirement that $f$ is a $C^{2}$-function on $\overline
{\Omega}$. This corresponds to the definition of a cubic spline on the
interval $\left[  t_{1},t_{N}\right]  $ with nodes $t_{1}<...<t_{N}$: it is a
function $S\in C^{2}\left[  t_{1},t_{N}\right]  $ which on each subinterval
$\left(  t_{j},t_{j+1}\right)  $ is identical with a solution of the
differential equation $\frac{d^{4}}{dt^{4}}S=0$ for $j=1,...,N-1.$

The existence of an interpolating biharmonic spline requires additional
assumptions on the smoothness of the domains $\Omega_{j}$ and higher
regularity of the data function on the boundary $\partial\Omega$ of $\Omega$
which might be expressed in terms of H\"{o}lder or Sobolev spaces. On the
other hand, we are interested only in error estimates, so at this place we do
not need to dwell in the more difficult question of the existence of an
interpolation biharmonic spline for a partition $\Omega_{1},...,\Omega_{n}$
(see also Section 4 for some comments in the case of annuli, and for the
details on the existence of interpolation polysplines consult the monograph
\cite{kounchevbook}, chapter $22$).

Our main result is the following error estimate where we recall that the
$L^{p}$-norm of a measurable function $f:\Omega\rightarrow\mathbb{C}$ is
defined by
\[
\text{ }\left\Vert f\right\Vert _{L^{p}\left(  \Omega\right)  }=\left(
\int_{\Omega}\left\vert f\left(  x\right)  \right\vert ^{p}dx\right)
^{\frac{1}{p}}.
\]

\begin{theorem}
\label{Thm1}Let $0<r_{1}<...<r_{N}$ and let $F\in C^{4}(\overline{A\left(
r_{1},r_{N}\right)  })$. Assume that $I_{4}\left(  F\right)  $ is a biharmonic
spline for the partition $A\left(  r_{1},r_{2}\right)  ,....,A\left(
r_{N},r_{N-1}\right)  $ which satisfies the (transfinite) interpolation
conditions
\begin{equation}
I_{4}\left(  x\right)  =F\left(  x\right)  \text{ for all }x\text{ with
}\left\vert x\right\vert =r_{j}\text{ and }j=1,...,N, \label{eqFinterpolating}%
\end{equation}
and the boundary conditions for the normal derivative $\frac{\partial
}{\partial n}$
\begin{equation}
\frac{\partial I_{4}\left(  F\right)  }{\partial n}\left(  x\right)
=\frac{\partial F\left(  x\right)  }{\partial n}\text{ for all }\left\vert
x\right\vert =r_{1}\text{ and for all }\left\vert x\right\vert =r_{N}.
\label{eqFcomplete}%
\end{equation}
Then, with $C_{d}=\max\left\{  \frac{1}{2d},\frac{1}{8}\right\}  $ as above,
the following estimate holds:
\[
\left\Vert F-I_{4}\left(  F\right)  \right\Vert _{L^{2}\left(  A\left(
r_{1},r_{N}\right)  \right)  }\leq C_{d}^{2}\cdot\max_{j=1,...,N-1}\left\vert
r_{j+1}-r_{j}\right\vert ^{4}\left\Vert \Delta^{2}F\right\Vert _{L^{2}\left(
A\left(  r_{1},r_{N}\right)  \right)  }.
\]

\end{theorem}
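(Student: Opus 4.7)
I plan to follow the $L^2$-analogue of de Boor's variational trick for cubic splines, the crucial feature being that $\Delta^2 I_4(F)=0$ on each subannulus and the interpolation and boundary conditions together kill all boundary contributions in a Green's identity. Set $\phi := F-I_4(F)$. Then $\phi\in C^2(\overline{A(r_1,r_N)})$, $\phi$ vanishes on every sphere $|x|=r_j$ (by transfinite interpolation), $\partial_n\phi$ vanishes on $|x|=r_1$ and $|x|=r_N$ (by (\ref{eqFcomplete})), and on each $A_j := A(r_j,r_{j+1})$ one has $\Delta^2\phi=\Delta^2 F$.

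The first main step is the identity
\[
\int_{A(r_1,r_N)} \phi\cdot \Delta^2 F\, dx \;=\; \int_{A(r_1,r_N)} (\Delta\phi)^2\, dx.
\]
I would apply Green's second formula on each $A_j$ with $u=\phi$, $v=\Delta\phi$, and sum. The boundary contributions $\int_{\partial A_j}(\phi\,\partial_n\Delta\phi - \Delta\phi\,\partial_n\phi)\, d\sigma$ cancel in total: on every interior sphere $|x|=r_j$, $2\le j\le N-1$, the factor $\phi$ vanishes, killing the first term; since $\phi\in C^2$ globally, $\partial_n\phi$ and $\Delta\phi$ are continuous across that sphere, so the $A_{j-1}$- and $A_j$-contributions annihilate via opposite outward normals; on $|x|=r_1$ and $|x|=r_N$ both $\phi$ and $\partial_n\phi$ vanish.

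The second main step is an $L^2$-counterpart of the inequality defining $c_d$: for every $u\in C^2(A_j)\cap C(\overline{A_j})$ vanishing on $\partial A_j$,
\[
\|u\|_{L^2(A_j)}\;\leq\; c_d(A_j)\,\|\Delta u\|_{L^2(A_j)}.
\]
I would derive this from spectral theory. If $\lambda_1=\lambda_1(A_j)$ denotes the first Dirichlet eigenvalue of $-\Delta$ on $A_j$, expansion in the $L^2$-orthonormal eigenbasis gives $\|\Delta u\|_{L^2}^2\ge \lambda_1^2\|u\|_{L^2}^2$. Testing the $L^\infty$ definition of $c_d(A_j)$ against the positive first eigenfunction yields $c_d(A_j)\ge 1/\lambda_1$, and the displayed inequality follows. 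Combining with the explicit bound $c_d(A_j)\le C_d(r_{j+1}-r_j)^2$ recalled in the abstract and summing squares over $j$,
\[
\|\phi\|_{L^2(A(r_1,r_N))}\;\leq\; C_d\,h^2\,\|\Delta\phi\|_{L^2(A(r_1,r_N))},\qquad h:=\max_j(r_{j+1}-r_j).
\]

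The conclusion then follows by chaining the two steps via Cauchy--Schwarz applied to the first:
\[
\|\Delta\phi\|_{L^2}^2 \;=\; \int \phi\,\Delta^2 F\, dx \;\leq\; \|\phi\|_{L^2}\|\Delta^2 F\|_{L^2} \;\leq\; C_d h^2\,\|\Delta\phi\|_{L^2}\,\|\Delta^2 F\|_{L^2},
\]
whence $\|\Delta\phi\|_{L^2}\le C_d h^2\|\Delta^2 F\|_{L^2}$, and a second application of the Poincar\'e-type estimate gives the claimed $\|\phi\|_{L^2}\le C_d^2 h^4\,\|\Delta^2 F\|_{L^2}$. The main obstacle I anticipate is the careful bookkeeping of the boundary terms in the Green's identity (leaning on $\phi\in C^2$ across every interface and on the conditions (\ref{eqFcomplete}) on the outer spheres), and rigorously transferring the $L^\infty$ constant $c_d$ into an $L^2$ estimate via the first-eigenvalue comparison.
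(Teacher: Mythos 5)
Your overall architecture matches the paper's: apply an $L^2$ Poincar\'e-type estimate with constant $c_d(A_j)\le C_d(r_{j+1}-r_j)^2$ on each subannulus, insert an orthogonality/integration-by-parts identity for $\Delta\phi$, and iterate to pick up the factor $C_d^2h^4$. Your final chaining via Cauchy--Schwarz from $\int\phi\,\Delta^2F=\int(\Delta\phi)^2$ is correct and in fact slightly slicker than the paper's route, which instead uses the best-approximation property of $\Delta I_4(F)$ among harmonic splines together with a second application of the harmonic interpolation error estimate to $\Delta F$. Your derivation of the $L^2$ inequality from the first Dirichlet eigenvalue ($\|\Delta u\|_{L^2}\ge\lambda_1\|u\|_{L^2}$ and $c_d\ge1/\lambda_1$ by testing against the positive first eigenfunction) is also a legitimate alternative to the paper's Green-function/Schur-test argument and yields the same constant, modulo justifying the eigenfunction expansion for $u$ only in $C^2(A_j)\cap C(\overline{A_j})$ with bounded $\Delta u$.

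The genuine gap is in your first main step, and it is exactly the obstruction the paper flags before abandoning the naive Green's-formula argument. When you apply Green's second formula on $A_j$ with $u=\phi$ and $v=\Delta\phi=\Delta F-\Delta I_4(F)$, the boundary term $\int_{S_{r_j}}\phi\,\partial_n\Delta\phi\,d\sigma$ involves $\partial_n\Delta I_4(F)$. The function $\Delta I_4(F)$ is harmonic in the open annulus and, by the global $C^2$ hypothesis on $I_4(F)$, only continuous up to $\partial A_j$; its gradient may be unbounded as one approaches the interface spheres. One must therefore work on shrunken annuli $A(\rho_j,\rho_{j+1})$ and pass to the limit, and the fact that $\phi$ vanishes on $S_{r_j}$ does not by itself force $\lim_{\rho\to r_j}\int_{S_\rho}\phi\,\partial_n\Delta I_4(F)\,d\sigma=0$, since the vanishing of $\phi$ can be outpaced by the blow-up of the normal derivative. ``$\phi$ vanishes, killing the first term'' is precisely the assertion that needs proof. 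The paper resolves this by expanding everything in spherical harmonics: the orthogonality relation is reduced to one-dimensional integrations by parts for each Fourier--Laplace coefficient $f_{k,\ell}$, where the analyticity of the radial parts of the harmonic spline in $r>0$ makes the corresponding boundary terms genuinely vanish (Section 4, Theorems \ref{Thm3a} and \ref{ThmLk}). Your identity $\int\phi\,\Delta^2F=\int(\Delta\phi)^2$ is equivalent to the paper's orthogonality relation applied to the particular harmonic spline $\varphi=\Delta I_4(F)$ (combined with an unproblematic global Green identity for $\phi$ and $\Delta F$ on $A(r_1,r_N)$, where the outer boundary conditions (\ref{eqFcomplete}) do kill the boundary terms), so you cannot avoid supplying this argument or an equivalent device; without it the proof is incomplete.
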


The above result should be compared with the $L^{2}$-error estimate of a
one-dimensional cubic spline: Assume that $F\in C^{4}\left[  t_{1}%
,t_{N}\right]  $ and let $S_{3}\left(  F\right)  $ be a cubic spline
interpolating $F$ at the points $t_{1},...,t_{N}$ and satisfying the
additional boundary condition
\[
\frac{d}{dt}F\left(  t_{1}\right)  =\frac{d}{dt}S_{3}\left(  F\right)  \left(
t_{1}\right)  \text{ and }\frac{d}{dt}F\left(  t_{N}\right)  =\frac{d}%
{dt}S_{3}\left(  F\right)  \left(  t_{N}\right)  .
\]
Then \
\[
\left\Vert F-S_{3}\left(  F\right)  \right\Vert _{L^{2}\left(  t_{1}%
,t_{N}\right)  }\leq\frac{4}{\pi^{4}\ }\max_{j=1,...,N}\left\vert
t_{j+1}-t_{j}\right\vert ^{4}\left\Vert \frac{d^{4}F}{dt^{4}}\right\Vert
_{L^{2}\left(  t_{1},t_{N}\right)  },
\]
see e.g. \cite{ADL09}. Let us also mention that for the supremum norm the
following error estimate
\begin{equation}
\max_{t\in\left[  t_{1},t_{N}\right]  }\left\vert F\left(  t\right)
-S_{3}\left(  F\right)  \left(  t\right)  \right\vert \leq\frac{1}{16}%
\max_{j=1,...,N}\left\vert t_{j+1}-t_{j}\right\vert ^{4}\max_{x\in\left[
t_{1},t_{N}\right]  }\left\vert \frac{d^{4}F}{dt^{4}}\left(  x\right)
\right\vert \label{ineqmax}%
\end{equation}
holds, see \cite[p. 55]{deBoor}. We leave the question open whether in Theorem
\ref{Thm1} one may replace the $L^{2}$-norm by the supremum norm. In passing
we mention that in \cite{KoRe20} the inequality (\ref{ineqmax}) has been
generalized to $L$-splines where $L$ is a differential operator with constant
coefficients of order $4.$

Let us briefly describe the structure of the paper: in Section 2 we shall
discuss error estimate for harmonic interpolation splines with respect to a
general partition $\Omega_{1},...,\Omega_{N}$. This problem is closely related
to the problem of finding the smallest constant $c_{d}\left(  \Omega\right)  $
among all constants $c$ which satisfy the inequality
\begin{equation}
\left\Vert f\right\Vert _{\Omega}\leq c\left\Vert \Delta f\right\Vert
_{\Omega} \label{eqSmallesta}%
\end{equation}
for all $f\in C\left(  \overline{\Omega}\right)  \cap C\left(  \Omega\right)
$ \emph{vanishing on the boundary} $\partial\Omega.$ In Section 2 we will
characterize the constant $c_{d}\left(  \Omega\right)  $:

\begin{theorem}
\label{Remark1}Let $\Omega$ be a bounded regular domain and let $h$ be the
solution of the Dirichlet problem for the data function $\left\vert
x\right\vert ^{2}.$ Then
\begin{equation}
c_{d}\left(  \Omega\right)  =\sup_{x\in\Omega}T_{0}\left(  x\right)  \text{
and }T_{0}\left(  x\right)  =\frac{1}{2d}\left(  h\left(  x\right)
-\left\vert x\right\vert ^{2}\right)  . \label{eqDefFoo}%
\end{equation}
The function $T_{0}$ is the unique function which vanishes on the boundary of
$\Omega$ and satisfies $\Delta T_{0}=-1.$
\end{theorem}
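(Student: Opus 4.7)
The plan is to first verify that $T_0$ has the claimed properties, then prove the two matching inequalities $c_d(\Omega)\le \sup_\Omega T_0$ and $c_d(\Omega)\ge \sup_\Omega T_0$. The key observation driving the whole argument is that the extremal function for the inequality is precisely the solution of the Poisson problem $\Delta u=-1$ with zero boundary data, and that the maximum principle furnishes both directions of the bound.

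For the basic properties: since $h$ is harmonic, $\Delta h=0$, and since $\Delta|x|^2=2d$, I immediately get $\Delta T_0=\tfrac{1}{2d}(0-2d)=-1$; also $T_0=0$ on $\partial\Omega$ because $h=|x|^2$ there. Regularity of $\Omega$ guarantees the existence of $h\in C(\overline\Omega)\cap C^\infty(\Omega)$, hence of $T_0$. Uniqueness: if $\widetilde T_0$ also vanishes on $\partial\Omega$ and satisfies $\Delta \widetilde T_0=-1$, then $T_0-\widetilde T_0$ is harmonic in $\Omega$, continuous up to the boundary, and zero on $\partial\Omega$, so the maximum principle forces $T_0\equiv \widetilde T_0$. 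The same principle applied to $T_0$ itself (which is superharmonic and vanishes on $\partial\Omega$) shows $T_0\ge 0$ in $\Omega$, so $\sup_\Omega T_0$ is a nonnegative number.

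For the upper bound $c_d(\Omega)\le \sup_\Omega T_0$: take any $f\in C^2(\Omega)\cap C(\overline\Omega)$ with $f=0$ on $\partial\Omega$, and set $M=\|\Delta f\|_\Omega$. Then
\[
\Delta(f-MT_0)=\Delta f+M\ge 0,\qquad \Delta(-f-MT_0)=-\Delta f+M\ge 0,
\]
so both $f-MT_0$ and $-f-MT_0$ are subharmonic in $\Omega$ and vanish on $\partial\Omega$. The maximum principle yields $f-MT_0\le 0$ and $-f-MT_0\le 0$, i.e.\ $|f(x)|\le MT_0(x)\le M\sup_\Omega T_0$ for every $x\in\Omega$. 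This is exactly $\|f\|_\Omega\le (\sup_\Omega T_0)\,\|\Delta f\|_\Omega$.

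For the lower bound $c_d(\Omega)\ge \sup_\Omega T_0$: I simply test the inequality $\|f\|_\Omega\le c\|\Delta f\|_\Omega$ with $f=T_0$. Since $T_0$ vanishes on $\partial\Omega$, it is admissible; moreover $\|\Delta T_0\|_\Omega=1$ while $\|T_0\|_\Omega=\sup_\Omega T_0$ (using $T_0\ge 0$). Any admissible constant $c$ therefore satisfies $c\ge \sup_\Omega T_0$, so the same holds for the infimum $c_d(\Omega)$. Combining the two inequalities finishes the proof. There is no real obstacle here; the only nontrivial point is the recognition that $T_0$ is both a comparison function (via maximum principle) and the actual extremizer — a feature that also explains why the computation of $c_d(\Omega)$ for an annulus reduces to an explicit ODE problem in the radial variable in the next section.
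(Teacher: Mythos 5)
Your proof is correct, and it takes a genuinely different route from the paper. The paper proves this result via the Green function: it invokes the representation formula $f(x)=a_d\int_\Omega G_\Omega(x,y)\,\Delta f(y)\,dy$ for admissible $f$, deduces $c_d(\Omega)\le a_d\sup_x\int_\Omega G_\Omega(x,y)\,dy$, and then identifies $T_0(x)=a_d\int_\Omega G_\Omega(x,y)\,dy$ and tests with $f=T_0$ to close the chain of inequalities. You instead run a pure comparison argument: from $\Delta(\pm f - MT_0)\ge 0$ with $M=\Vert\Delta f\Vert_\Omega$ and zero boundary values, the weak maximum principle for subharmonic functions gives $|f|\le MT_0$ pointwise, and the lower bound again comes from testing with $T_0$. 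Your version is more elementary and self-contained — it needs no existence of the Green function, no justification of the representation formula (which the paper only sketches, with integrability hypotheses), and no case split between $d=2$ and $d\ge 3$; the pointwise bound $|f(x)|\le \Vert\Delta f\Vert_\Omega\, T_0(x)$ is also slightly sharper than what is strictly needed. What the paper's route buys is the intermediate inequality $c_d(\Omega)\le a_d\sup_{x\in\Omega}\int_\Omega G_\Omega(x,y)\,dy$ for arbitrary bounded domains (without the regularity hypothesis), which it records as a statement of independent interest. The only point you leave implicit is the trivial case $\Vert\Delta f\Vert_\Omega=\infty$, where the defining inequality holds vacuously; with that remark added, the argument is complete.
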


The function $T_{0}$ plays an eminent role in various area of mathematics and
is called the torsion function, see e.g. the fundamental work of G. P\'{o}lya,
G. Szeg\"{o} about isoperimetric inequalities in \cite{PoSz51}, or the
monography \cite{Sperb81}. There is a vast literature on this subject with
many ramifications and it would take too much space to survey the results, so
we only mention a very incomplete list of new references \cite{BCH98},
\cite{Cabre95}, \cite{Cabre17}, \cite{CaRa11}, \cite{CaRuTa10}. In Section 2
we shall provide a self-contained proof of Theorem \ref{Remark1} which is
based on a Green function approach.

In Section 3 we provide the proof of Theorem \ref{Thm1}. It is remarkable that
the $L^{2}$ -error estimate in the biharmonic case can be performed by an
iterative argument where the $L^{2}$-error estimate for harmonic splines is
used twice.

In Section 4 we present a proof of an orthogonality relation for biharmonic
interpolation splines which is used in Section 3.

In Section 5 we provide a computation of the best constant $c_{d}\left(
\Omega\right)  $ for the \emph{annular domain } $A\left(  r,R\right)
=\left\{  x\in\mathbb{R}^{d}:r<\left\vert x\right\vert <R\right\}  $ and we
prove the following inequalities:
\[
\min\{\frac{1}{2d},\frac{1}{8}\}\left(  R-r\right)  ^{2}\leq c_{d}\left(
A\left(  r,R\right)  \right)  \leq\max\{\frac{1}{2d},\frac{1}{8}\}\left(
R-r\right)  ^{2}.
\]

Finally, let us mention that some of the presented concepts can be
generalized. Recall that a function $f:\Omega\rightarrow\mathbb{C}$ is called
\emph{polyharmonic of order} $p$ if $f\in C^{2p}\left(  \Omega\right)  $ and
\[
\Delta^{p}f\left(  x\right)  =0\text{ for all }x\in\Omega
\]
where $\Delta^{p}$ is the $p$-th iterate of $\Delta,$ see \cite{ACL83},
\cite{Avan85}, \cite{GGS10}. Polyharmonic functions are often used in applied
mathematics, see e.g. \cite{BBRV05}, \cite{DKLR11},
\cite{kounchevRenderKleinDirac}, \cite{KoRe13},  \cite{KoReMathNach},
\cite{MaNe90}, \cite{Rabut1992}, \cite{Rabut2008}. Slightly more general than
in \cite{kounchevbook} we define a function $f:\overline{\Omega}%
\rightarrow\mathbb{C}$ to be \emph{polyspline of order }$p$ \emph{for a
partition }$\Omega_{1},...,\Omega_{N}$ if $f\in C^{2p-2}\left(  \overline
{\Omega}\right)  $ for $\Omega=\Omega_{1}\cup\ldots\cup\Omega_{N}$ and
$\Delta^{p}f\left(  x\right)  =0$ for all $x\in\Omega_{j}$ and for
$j=1,....,N$. Cardinal polysplines of order $p$ on strips or annuli have been
discussed by the first two authors in a series of papers
\cite{kounchevrenderPAMS},  \cite{KoReCM}, \cite{kounchevrenderJAT}. 

In this paper we have dealt with transfinite interpolation and it might be of
interest to compare our results with the thin plate splines of order $p>d/2$
(in $\mathbb{R}^{d})$ introduced by J. Duchon in \cite{Duchon77} for the
interpolation at a finite number of scattered points $x_{1},...,x_{N}%
\in\mathbb{R}^{d}$. Thin plate splines are polyharmonic functions of order $p$
on the set $\mathbb{R}^{d}\diagdown\left\{  x_{1},....,x_{N}\right\}  $ since
they are a finite linear combination of translates of the fundamental solution
of $\Delta^{p}$ in $\mathbb{R}^{d}.$ In contrast to a polyspline a thin plate
spline is only a function in $C^{2p-d-1}\left(  \mathbb{R}^{d}\right)  $ which
is the reason for the requirement $p>d/2.$ By definition, an interpolating
thin plate spline is defined as the unique minimizer of the integral
functional%
\[
\left\langle f,f\right\rangle _{p,2,\mathbb{R}^{d}}:=\int_{\mathbb{R}^{d}}%
{\displaystyle\sum_{\left\vert \alpha\right\vert =p}}
\frac{p!}{\alpha!}D^{\alpha}f\left(  x\right)  \cdot\overline{D^{\alpha
}f\left(  x\right)  }dx
\]
among all functions $f:\mathbb{R}^{d}\rightarrow\mathbb{R}$ having all partial
derivative $D^{\alpha}f$ of total order $\left\vert \alpha\right\vert =p$ in
$L^{2}\left(  \mathbb{R}^{d}\right)  $ and interpolating the data. Here we
used multi-index notation $\alpha=\left(  \alpha_{1},...,\alpha_{d}\right)
\in\mathbb{N}_{0}^{d}$ with $\left\vert \alpha\right\vert =\alpha_{1}%
+\cdots+\alpha_{d}$ and $\alpha!=\alpha_{1}!\cdots\alpha_{d}!$ and $D^{\alpha
}=\frac{\partial^{\alpha_{1}}}{\partial x_{1}^{\alpha_{1}}}...\frac
{\partial^{\alpha_{d}}}{\partial x_{d}^{\alpha_{d}}}$. In \cite{Duchon78} one
can find the error estimates which served as model example for error estimates
for interpolation with radial basis functions, see e.g.  \cite{buhmann},
\cite{NaWa04} \cite{Wendland}, \cite{WuSchaback}. Unlike our results, in these
references the constants for the error estimates are not explicit, and only
Sobolev norms are used; only in two dimensions some explicit constants are
found, cf. \cite{Powell1990}, \cite{Powell1994}. It is our expectation that
the error estimates for polysplines can be used to improve the error estimates
for thin plates for data which is structured along curves, a subject we hope
to address in a future paper.

\section{Harmonic interpolation splines}

We say that an open set $\Omega$ is \emph{regular} in $\mathbb{R}^{d}$ if each
boundary point is regular, see \cite[p. 179]{ArGa} for definition, and
\cite[Theorem 6.5.5]{ArGa} for a characterization. It is known that for a
regular bounded domain $\Omega$ the Dirichlet problem is solvable, see
\cite[Theorem 6.5.5 and 6.5.4]{ArGa}.

At first we discuss the error estimate for harmonic interpolation splines:

\begin{theorem}
\label{ThmMAXGeneral}Assume that $\Omega_{1},....\Omega_{N}$ are pairwise
disjoint bounded regular domains and define $\Omega=\cup_{j=1}^{N}\Omega_{j}.$
If $F\in C^{2}\left(  \overline{\Omega}\right)  $ and $I_{2}\left(  F\right)
$ is the harmonic spline interpolating $F$ on $\partial\Omega$ then
\[
\left\Vert F-I_{2}\left(  F\right)  \right\Vert _{\Omega}\leq\max
_{j=1,...,N}\text{ }c_{d}\left(  \Omega_{j}\right)  \ \cdot\sup_{y\in\Omega
}\left\vert \Delta F\left(  y\right)  \right\vert
\]
where $c_{d}\left(  \Omega_{j}\right)  $ is the smallest constant $c$ such
that (\ref{eqSmallesta}) holds for all function $f\in C\left(  \overline
{\Omega_{j}}\right)  \cap C^{2}\left(  \Omega_{j}\right)  $ vanishing on the
boundary $\partial\Omega_{j}$ for $j=1,...,N$.
\end{theorem}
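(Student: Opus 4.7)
The plan is straightforward once one notices that the error function decouples across the pieces of the partition. Set $E := F - I_2(F)$, viewed as a function on $\overline{\Omega}$. The two ingredients needed are that on each $\Omega_j$ the function $E$ vanishes on the boundary $\partial \Omega_j$, and that $\Delta E = \Delta F$ on $\Omega_j$; then the characterization of $c_d(\Omega_j)$ as the smallest constant in (\ref{eqSmallesta}) can be applied piece by piece.

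First I would fix some $j \in \{1,\dots,N\}$ and examine $E$ restricted to $\overline{\Omega_j}$. Because $F \in C^2(\overline{\Omega})$ and $I_2(F)$ is harmonic in $\Omega_j$ (hence real-analytic there) and continuous on $\overline{\Omega_j}$, the restriction $E|_{\Omega_j}$ belongs to $C^2(\Omega_j) \cap C(\overline{\Omega_j})$, which is precisely the class in which $c_d(\Omega_j)$ was defined. The interpolation condition (\ref{eqintertransfinite}) says $I_2(F) = F$ on $\partial \Omega = \bigcup_j \partial \Omega_j$, and in particular $E = 0$ on $\partial \Omega_j$. Moreover, since $I_2(F)$ is harmonic on $\Omega_j$, we have $\Delta E = \Delta F - \Delta I_2(F) = \Delta F$ on $\Omega_j$.

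Applying the defining property of $c_d(\Omega_j)$ to $E|_{\Omega_j}$ then yields
\[
\|E\|_{\Omega_j} \;\leq\; c_d(\Omega_j)\, \|\Delta E\|_{\Omega_j} \;=\; c_d(\Omega_j)\, \|\Delta F\|_{\Omega_j} \;\leq\; c_d(\Omega_j)\, \sup_{y \in \Omega} |\Delta F(y)|.
\]
Taking the maximum over $j$ on the left (since $\Omega = \bigcup_j \Omega_j$, the supremum of $|E|$ over $\Omega$ equals the maximum of its suprema over the $\Omega_j$) and bounding $c_d(\Omega_j)$ by $\max_k c_d(\Omega_k)$ on the right gives the claimed estimate.

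There is essentially no obstacle: the existence and uniqueness of $I_2(F)$ under the regularity assumption on the $\Omega_j$ has been discussed earlier, and the argument above uses only the two defining properties of $I_2(F)$ plus the definition of $c_d(\Omega_j)$. The only point to be careful about is the regularity of $E$ at $\partial \Omega_j$ — but continuity on $\overline{\Omega_j}$ (which is all (\ref{eqSmallesta}) requires for the boundary behaviour) follows from continuity of both $F$ and $I_2(F)$ on $\overline{\Omega}$. Thus the serious content of the theorem lies not in this reduction but in the subsequent identification and estimation of $c_d(\Omega_j)$ for annular $\Omega_j$, which is handled by Theorems \ref{Remark1} and the annular bounds stated in the abstract.
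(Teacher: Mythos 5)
Your argument is correct and follows essentially the same route as the paper: set $E=F-I_{2}(F)$, observe that on each $\Omega_{j}$ it lies in $C^{2}(\Omega_{j})\cap C(\overline{\Omega_{j}})$, vanishes on $\partial\Omega_{j}$, and satisfies $\Delta E=\Delta F$ there, then apply the defining property of $c_{d}(\Omega_{j})$ and take the maximum over $j$. No gaps; the paper's proof is the same reduction, stated more tersely.
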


\begin{proof}
We consider $f\left(  x\right)  =F\left(  x\right)  -I_{2}\left(  F\right)
\left(  x\right)  .$ Then $f\left(  x\right)  =0$ for all $x\in\partial
\Omega_{j}\subset\partial\Omega.$ Further $f\in C\left(  \overline{\Omega_{j}%
}\right)  \cap C^{2}\left(  \Omega_{j}\right)  $ since $I_{2}\left(  F\right)
$ is harmonic on $\Omega_{j}$ and continuous on $\overline{\Omega_{j}}.$ Hence
for $x\in\Omega_{j}$%
\begin{equation}
\left\vert F\left(  x\right)  -I_{2}\left(  F\right)  \left(  x\right)
\right\vert \leq\ c_{d}\left(  \Omega_{j}\right)  \cdot\sup_{y\in\Omega_{j}%
}\left\vert \Delta F\left(  y\right)  \right\vert \text{ } \label{eqlast1}%
\end{equation}
The statement is now obvious.
\end{proof}

A fundamental theorem in potential theory states that for any open set $U$ in
$\mathbb{R}^{d}$ with $d\geq3,$ or for any bounded open set $U$ in
$\mathbb{R}^{2},$ the Green function $G_{\Omega}\left(  x,y\right)  $ exists,
see \cite[p. 90]{ArGa}. Further we denote by $\omega_{d}$ the volume of the
unit ball and define
\[
a_{d}=\left\{
\begin{array}
[c]{cc}%
\frac{1}{2\pi} & \text{for }d=2\\
\frac{1}{d\left(  d-2\right)  \omega_{d}} & \text{for }d\geq3.
\end{array}
\right.
\]

\begin{theorem}
If $\Omega$ is a bounded domain in $\mathbb{R}^{d}$ then
\begin{equation}
c_{d}\left(  \Omega\right)  \leq a_{d}\sup_{x\in\Omega}\int_{\Omega}G_{\Omega
}\left(  x,y\right)  dy. \label{eqDefCOmega}%
\end{equation}
Equality holds when $\Omega$ is a regular domain.
\end{theorem}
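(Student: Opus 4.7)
The plan is to base everything on the Green's representation formula for the Poisson equation with zero Dirichlet data. First I would establish, for every $f\in C^{2}(\Omega)\cap C(\overline{\Omega})$ vanishing on $\partial\Omega$ and having bounded Laplacian, the identity
\[
f(x) \;=\; -a_{d} \int_{\Omega} G_{\Omega}(x,y)\,\Delta f(y)\,dy, \qquad x\in\Omega.
\]
This is Green's second identity applied to $f$ and $G_{\Omega}(x,\cdot)$ after excising a small ball $B(x,\varepsilon)$ around the singularity and letting $\varepsilon\to 0$; the constant $a_{d}$ appears because $-\Delta_{y}\bigl(a_{d}G_{\Omega}(x,y)\bigr)=\delta_{x}$, which is merely the normalization of the fundamental solution $|x-y|^{2-d}$ (or $-\log|x-y|$ in dimension two). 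Both boundary terms vanish because $f=0$ and $G_{\Omega}(x,\cdot)=0$ on $\partial\Omega$.

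With the representation in hand, the upper bound is immediate. Since $G_{\Omega}\geq 0$,
\[
|f(x)| \;\leq\; a_{d}\, \left(\int_{\Omega} G_{\Omega}(x,y)\,dy\right)\cdot \|\Delta f\|_{\Omega},
\]
and taking $\sup_{x\in\Omega}$ followed by the infimum over admissible constants $c$ gives the inequality $c_{d}(\Omega)\leq a_{d}\sup_{x\in\Omega}\int_{\Omega} G_{\Omega}(x,y)\,dy$ in (\ref{eqDefCOmega}).

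For the equality claim I would test against the torsion function $T_{0}$ supplied by Theorem \ref{Remark1}. Since $T_{0}\in C^{2}(\Omega)\cap C(\overline{\Omega})$, $T_{0}=0$ on $\partial\Omega$, and $\Delta T_{0}=-1$, the representation yields
\[
T_{0}(x) \;=\; a_{d} \int_{\Omega} G_{\Omega}(x,y)\,dy,
\]
so that $\|T_{0}\|_{\Omega} = a_{d}\sup_{x\in\Omega}\int_{\Omega} G_{\Omega}(x,y)\,dy$ while $\|\Delta T_{0}\|_{\Omega}=1$. Thus $T_{0}$ saturates the inequality defining $c_{d}(\Omega)$, giving the reverse bound and hence equality.

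The main technical obstacle is justifying the representation formula at the stated level of regularity: the boundary integral $\int_{\partial\Omega} f\,\partial_{\nu}G_{\Omega}\,dS$ has a classical meaning only when $\partial\Omega$ and $f$ are smooth enough, whereas here $f$ is merely continuous on $\overline{\Omega}$ and $\Omega$ is an arbitrary bounded domain. To handle this I would exhaust $\Omega$ by regular subdomains $\Omega_{n}\uparrow\Omega$ on which the classical identity applies, and pass to the limit using the monotone convergence $G_{\Omega_{n}}\uparrow G_{\Omega}$ together with the maximum principle applied to the bounded harmonic function $f-g_{n}$, where $g_{n}(x):=-a_{d}\int_{\Omega_{n}}G_{\Omega_{n}}(x,y)\Delta f(y)\,dy$. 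The equality part is untouched by this difficulty, because regularity of $\Omega$ is explicitly assumed and so the classical identity applies to $T_{0}$ directly.
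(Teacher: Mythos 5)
Your proposal is correct and follows essentially the same route as the paper: the Green representation formula gives the upper bound, and testing the defining inequality for $c_{d}\left(  \Omega\right)  $ against the torsion function $T_{0}$ (which satisfies $\Delta T_{0}=-1$, vanishes on $\partial\Omega$, and equals $a_{d}\int_{\Omega}G_{\Omega}\left(  x,y\right)  dy$ by that same representation) yields equality in the regular case. The only difference is that you sketch a justification of the representation formula via exhaustion by regular subdomains, a step the paper simply asserts; your sign convention for the representation formula is also the consistent one, whereas the paper's displayed version carries a sign slip that it silently corrects when applied to $T_{0}$.
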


\begin{proof}
Assume that $f\in C\left(  \overline{\Omega}\right)  \cap C\left(
\Omega\right)  $ vanishes on the boundary $\partial\Omega.$ If $\Delta f$ is
unbounded on $\Omega$ the inequality (\ref{eqSmallesta}) is trivial. If
$\Delta f$ is bounded it follows that
\begin{equation}
\int_{\Omega}\left\vert \Delta f\left(  y\right)  \right\vert dy<\infty\text{
and }\int_{\Omega}\left\vert \Delta f\left(  y\right)  \right\vert
^{p}dy<\infty. \label{eqIntegrable}%
\end{equation}
for some $p>d/2.$ Then it can be shown that the following representation
formula holds
\begin{equation}
f\left(  x\right)  =a_{d}\int_{\Omega}G_{\Omega}\left(  x,y\right)  \Delta
f\left(  y\right)  dy \label{eqRepHarmonic}%
\end{equation}
for all $x\in\Omega$, and all $f\in C^{2}\left(  \Omega\right)  \cap C\left(
\overline{\Omega}\right)  $ which vanish on the boundary $\partial\Omega.$
Clearly (\ref{eqRepHarmonic}) implies that
\begin{equation}
\left\vert f\left(  x\right)  \right\vert \leq\sup_{y\in\Omega}\left\vert
\Delta f\left(  y\right)  \right\vert \text{ }\cdot a_{d}\int_{\Omega
}G_{\Omega}\left(  x,y\right)  dy\text{ } \label{eq1}%
\end{equation}
for any $x\in\Omega.$ The first result follows since $c_{d}\left(
\Omega\right)  $ is the smallest number satisfying (\ref{eqSmallesta}).

Now assume that $\Omega$ is regular, so the Dirichlet problem is solvable for
$\Omega$. Then there exists a harmonic function $h$ in $C\left(
\overline{\Omega}\right)  $ such that $h\left(  \xi\right)  =\left\vert
\xi\right\vert ^{2}$ for all $\xi\in\partial\Omega.$ Clearly the function
\begin{equation}
T_{0}\left(  x\right)  =\frac{1}{2d}\left(  h\left(  x\right)  -\left\vert
x\right\vert ^{2}\right)  \label{eqDefFnull}%
\end{equation}
is in $C\left(  \overline{\Omega}\right)  \cap C^{2}\left(  \Omega\right)  $
and vanishes on $\partial\Omega.$ Further $\Delta T_{0}\left(  x\right)  =-1.$
Since $c_{d}\left(  \Omega\right)  $ is the smallest constant satisfying
(\ref{eqSmallesta}) we infer that
\begin{equation}
\left\vert T_{0}\left(  x\right)  \right\vert \leq c_{d}\left(  \Omega\right)
\sup_{\tau\in\Omega}\left\vert \Delta T_{0}\left(  \tau\right)  \right\vert
=c_{d}\left(  \Omega\right)  . \label{eqF00}%
\end{equation}
The representation formula (\ref{eqRepHarmonic}) shows that
\begin{equation}
T_{0}\left(  x\right)  =-a_{d}\int_{\Omega}G_{\Omega}\left(  x,y\right)
\Delta T_{0}\left(  x\right)  dx=a_{d}\int_{\Omega}G_{\Omega}\left(
x,y\right)  dx. \label{eqFintegral}%
\end{equation}
It follows that from (\ref{eqDefCOmega}), formulae (\ref{eqFintegral}) and
(\ref{eqF00}) that
\[
c_{d}\left(  \Omega\right)  \leq a_{d}\sup_{x\in\Omega}\int_{\Omega}G_{\Omega
}\left(  x,y\right)  dx=\sup_{x\in\Omega}T_{0}\left(  x\right)  \leq
c_{d}\left(  \Omega\right)  .
\]
The proof is complete.
\end{proof}

Next we turn to $L^{p}$-estimates of harmonic splines:

\begin{theorem}
\label{ThmMainSecondS2}Let $\Omega$ be a bounded domain in $\mathbb{R}^{d}$.
Assume that $f\in C\left(  \overline{\Omega}\right)  \cap C^{2}\left(
\Omega\right)  $ vanishes on the boundary $\partial\Omega$ and that $\Delta f$
is bounded. Then for any $p>1$ and its conjugate exponent $q$ defined by
$\frac{1}{q}+\frac{1}{p}=1$ we have
\begin{equation}
\left\Vert f\right\Vert _{L^{p}\left(  \Omega\right)  }\leq\left(  \max
_{x\in\Omega}\int_{\Omega}a_{d}G_{\Omega}\left(  x,y\right)  dy\right)
^{\frac{p+q}{pq}}\cdot\left\Vert \Delta f\right\Vert _{L^{p}\left(
\Omega\right)  }. \label{eqineqL2}%
\end{equation}

\end{theorem}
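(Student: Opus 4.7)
The plan is to exploit the Green-function representation $f(x)=a_d\int_{\Omega}G_{\Omega}(x,y)\Delta f(y)\,dy$ established in the previous theorem, and apply Hölder's inequality twice in the style of a Schur-type test: once pointwise in $y$ to contribute the factor with exponent $1/q$, and once after Fubini to contribute the factor with exponent $1/p$. Summing these two contributions accounts for the exponent $\frac{p+q}{pq}=\frac{1}{p}+\frac{1}{q}=1$ appearing in the statement.

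More precisely, I would first write the Green kernel as
\[
G_{\Omega}(x,y)=G_{\Omega}(x,y)^{1/q}\cdot G_{\Omega}(x,y)^{1/p}
\]
and apply Hölder's inequality with exponents $q$ and $p$ to the integrand $G_{\Omega}(x,y)\,\Delta f(y)$, which yields the pointwise estimate
\[
|f(x)|^{p}\leq a_{d}^{p}\left(\int_{\Omega}G_{\Omega}(x,y)\,dy\right)^{p/q}\int_{\Omega}G_{\Omega}(x,y)\,|\Delta f(y)|^{p}\,dy.
\]
Then I would integrate over $x\in\Omega$, pull out the supremum of $\int G_{\Omega}(x,y)\,dy$ raised to $p/q$, and apply Fubini to the remaining double integral.

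At the Fubini step the crucial ingredient is the symmetry $G_{\Omega}(x,y)=G_{\Omega}(y,x)$ of the Green function: after swapping the order of integration, one obtains
\[
\int_{\Omega}|\Delta f(y)|^{p}\int_{\Omega}G_{\Omega}(x,y)\,dx\,dy\leq\left(\max_{y\in\Omega}\int_{\Omega}G_{\Omega}(x,y)\,dx\right)\|\Delta f\|_{L^{p}(\Omega)}^{p},
\]
and by symmetry the inner supremum equals $\max_{x}\int_{\Omega}G_{\Omega}(x,y)\,dy$. Collecting the exponents $p/q$ (from Hölder) and $1$ (from Fubini) produces the exponent $p/q+1=p(1/q+1/p)=p\cdot\frac{p+q}{pq}$ on $\max_{x}\int a_d G_{\Omega}(x,y)\,dy$, so extracting a $p$-th root yields the claimed inequality.

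The one genuine subtlety — not a real obstacle but worth flagging — is ensuring that all integrals are finite and that the representation formula applies. This is where the hypothesis that $\Delta f$ is bounded and the integrability consequences \eqref{eqIntegrable} recorded in the preceding theorem come in: they justify both the use of \eqref{eqRepHarmonic} and the Fubini interchange. Beyond that, the argument is a clean Schur-test style double Hölder estimate, and the role of the conjugate-exponent identity $\frac{1}{p}+\frac{1}{q}=1$ is precisely to make the two partial exponents add up so that the final bound collapses to a single power of $\max_{x}\int_{\Omega}a_{d}G_{\Omega}(x,y)\,dy$, recovering the constant $c_{d}(\Omega)$ of the preceding theorem in the limit $p=\infty$.
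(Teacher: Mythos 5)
Your proposal is correct and follows essentially the same route as the paper: the Green-function representation, the splitting $G_{\Omega}=G_{\Omega}^{1/q}G_{\Omega}^{1/p}$ with Hölder, then integration in $x$ and Fubini, with the exponents $p/q$ and $1$ combining to $(p+q)/q$ before taking the $p$-th root. The only (welcome) difference is that you explicitly flag the symmetry $G_{\Omega}(x,y)=G_{\Omega}(y,x)$ needed to bound $\int_{\Omega}G_{\Omega}(x,y)\,dx$ by $\max_{x}\int_{\Omega}G_{\Omega}(x,y)\,dy$, a point the paper uses only implicitly.
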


\begin{proof}
The representation formula (\ref{eqRepHarmonic}) and the H\"{o}lder inequality
show that
\begin{align*}
\left\vert f\left(  x\right)  \right\vert  &  \leq\ \int_{\Omega}\left(
a_{d}G_{\Omega}\left(  x,y\right)  \right)  ^{\frac{1}{q}}\cdot\left(
a_{d}G_{\Omega}\left(  x,y\right)  \right)  ^{\frac{1}{p}}\left\vert \Delta
f\left(  y\right)  \right\vert dy\\
&  \leq\left(  \int_{\Omega}a_{d}G_{\Omega}\left(  x,y\right)  dy\right)
^{\frac{1}{q}}\left(  \int_{\Omega}a_{d}G_{\Omega}\left(  x,y\right)
\left\vert \Delta f\left(  y\right)  \right\vert ^{p}dy\right)  ^{\frac{1}{p}%
}.
\end{align*}
Write $S\left(  x\right)  =a_{d}\int_{\Omega}G_{\Omega}\left(  x,y\right)
dy,$ take the $p$-th power on both sides and integrate with respect to $x,$
then
\[
\int_{\Omega}\left\vert f\left(  x\right)  \right\vert ^{p}dx\leq\int_{\Omega
}\left(  S\left(  x\right)  \right)  ^{\frac{p}{q}}\int_{\Omega}a_{d}%
G_{\Omega}\left(  x,y\right)  \left\vert \Delta f\left(  y\right)  \right\vert
^{p}dydx.
\]
Then Fubini's theorem shows that
\begin{equation}
\int_{\Omega}\left\vert f\left(  x\right)  \right\vert ^{p}dx\leq
\ \int_{\Omega}\left(  \int_{\Omega}a_{d}G_{\Omega}\left(  x,y\right)  \left(
S\left(  x\right)  \right)  ^{\frac{p}{q}}dx\right)  \left\vert \Delta
f\left(  y\right)  \right\vert ^{p}dy. \label{eqIneq21}%
\end{equation}
Further we see that
\begin{align}
\int_{\Omega}a_{d}G_{\Omega}\left(  x,y\right)  \left(  S\left(  x\right)
\right)  ^{\frac{p}{q}}dx  &  \leq\max_{x\in\Omega}\left(  S\left(  x\right)
\right)  ^{\frac{p}{q}}\int_{\Omega}a_{d}G_{\Omega}\left(  x,y\right)
dx\label{eqDefF1}\\
&  \leq\max_{x\in\Omega}\left(  S\left(  x\right)  \right)  ^{\frac{p}{q}%
+1}=\max_{x\in\Omega}\left(  S\left(  x\right)  \right)  ^{\frac{p+q}{q}}%
\end{align}
Now take the $p$-th square root in (\ref{eqIneq21}) and we arrive at
\[
\left\Vert f\right\Vert _{L^{p}\left(  \Omega\right)  }\leq\max_{x\in\Omega
}\left(  S\left(  x\right)  \right)  ^{\frac{p+q}{pq}}\left\Vert \Delta
f\right\Vert _{L^{p}\left(  \Omega\right)  }.
\]

\end{proof}

We apply now the results to the case of annular domains.

\begin{theorem}
\label{ThmMainHarmSpAnn}Let $r_{1}<....<r_{N}$ be real numbers, and let $F\in
C^{2}(\overline{A\left(  r_{1},r_{N}\right)  }).$ Assume that $I_{2}\left(
F\right)  $ is a harmonic spline interpolating $F\ $ for all $x$ with
$\left\vert x\right\vert =r_{j}$ and $j=1,...N,$ then for the supremum norm
the estimate
\[
\left\Vert F-I_{2}\left(  F\right)  \right\Vert _{A\left(  r_{1},r_{N}\right)
}\ \leq C_{d}\max_{j=1,...,N-1}\text{ }\left(  r_{j+1}-r_{j}\right)
^{2}\left\Vert \Delta F\right\Vert _{A\left(  r_{1},r_{N}\right)  }\ ,
\]
holds, and for the $L^{2}$-norm
\[
\left\Vert F-I_{2}\left(  F\right)  \right\Vert _{L^{2}\left(  A\left(
r_{1},r_{N}\right)  \right)  }\leq C_{d}\max_{j=1,...,N-1}\left(
r_{j+1}-r_{j}\right)  ^{2}\left\Vert \Delta F\right\Vert _{L^{2}\left(
A\left(  r_{1},r_{N}\right)  \right)  }.
\]
where $C_{d}=\max\left\{  \frac{1}{2d},\frac{1}{8}\right\}  .$
\end{theorem}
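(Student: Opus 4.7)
The plan is to reduce both inequalities to bounds of the form $\max_j c_d(A(r_j,r_{j+1}))$ times the appropriate norm of $\Delta F$, and then to invoke the explicit annular bound that will be established in Section~5.

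First I would set $\Omega_j := A(r_j,r_{j+1})$ for $j=1,\ldots,N-1$. These are pairwise disjoint bounded regular domains, and
\[
\partial\Omega_1\cup\cdots\cup\partial\Omega_{N-1} \;=\; \bigcup_{j=1}^{N}\{x:|x|=r_j\},
\]
so the transfinite interpolation hypothesis on $I_2(F)$ is exactly the hypothesis of Theorem~\ref{ThmMAXGeneral}. A direct application of Theorem~\ref{ThmMAXGeneral} yields
\[
\|F-I_2(F)\|_{A(r_1,r_N)} \le \max_{j=1,\ldots,N-1} c_d(\Omega_j)\cdot\|\Delta F\|_{A(r_1,r_N)},
\]
which handles the supremum case.

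For the $L^{2}$ estimate I would apply Theorem~\ref{ThmMainSecondS2} with $p=q=2$ to $f:=F-I_2(F)$ restricted to each $\Omega_j$; note that $f$ vanishes on $\partial\Omega_j$ and that $\Delta f=\Delta F$ on $\Omega_j$ because $I_2(F)$ is harmonic there. The exponent $(p+q)/(pq)$ equals $1$, and by Theorem~\ref{Remark1} applied to the regular domain $\Omega_j$ the quantity $\max_{x\in\Omega_j}\int_{\Omega_j}a_d G_{\Omega_j}(x,y)\,dy$ equals $c_d(\Omega_j)$. Hence
\[
\|F-I_2(F)\|_{L^{2}(\Omega_j)} \le c_d(\Omega_j)\,\|\Delta F\|_{L^{2}(\Omega_j)}.
\]
Squaring, summing over $j$, and using that the removed spheres $\{|x|=r_j\}$ have Lebesgue measure zero so that $\sum_j\|\cdot\|_{L^{2}(\Omega_j)}^{2}=\|\cdot\|_{L^{2}(A(r_1,r_N))}^{2}$, one arrives at
\[
\|F-I_2(F)\|_{L^{2}(A(r_1,r_N))} \le \max_{j} c_d(\Omega_j)\cdot\|\Delta F\|_{L^{2}(A(r_1,r_N))}.
\]

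To conclude, I would substitute the annular estimate to be proved in Section~5, namely $c_d(A(r_j,r_{j+1}))\le C_d(r_{j+1}-r_j)^{2}$ with $C_d=\max\{1/(2d),1/8\}$, into both inequalities. Everything up to that substitution is routine bookkeeping built on Theorems~\ref{ThmMAXGeneral}, \ref{Remark1} and \ref{ThmMainSecondS2}; the only step where the specific geometry of the annulus truly enters is the bound on $c_d(A(r,R))$ obtained from the explicit computation of the torsion function $T_0$ on an annulus. That computation is the main obstacle, and it is precisely what Section~5 is devoted to.
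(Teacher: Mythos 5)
Your proposal is correct and follows essentially the same route as the paper: the supremum bound via Theorem~\ref{ThmMAXGeneral}, the $L^{2}$ bound via Theorem~\ref{ThmMainSecondS2} with $p=q=2$ applied annulus by annulus (with the Green-integral maximum identified with $c_{d}(\Omega_j)$ for the regular domain $\Omega_j$), and finally the Section~5 estimate $c_{d}(A(r_j,r_{j+1}))\leq C_{d}(r_{j+1}-r_j)^{2}$. No gaps.
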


\begin{proof}
The best constant $c_{d}\left(  \Omega\right)  $ for annular domains is
characterized and estimated in the last Section, see Theorem \ref{ThmAnnulus}.
Now Theorem \ref{ThmMAXGeneral} yields the first statement. For the second
statement we note that
\[
\left\Vert F-I_{2}\left(  F\right)  \right\Vert _{L^{2}\left(  A\left(
r_{1},r_{N}\right)  \right)  }^{2}=%
{\displaystyle\sum_{j=1}^{N-1}}
\left\Vert F-I_{2}\left(  F\right)  \right\Vert _{L^{2}\left(  A\left(
r_{j},r_{j+1}\right)  \right)  }^{2}.
\]
Now apply Theorem \ref{ThmMainSecondS2} for $p=q=2$ to the domain $A\left(
r_{j},r_{j+1}\right)  $ and we have
\[
\left\Vert F-I_{2}\left(  F\right)  \right\Vert _{L^{2}\left(  A\left(
r_{j},r_{j+1}\right)  \right)  }^{2}\leq C_{d}^{2}\ \cdot\left\Vert \Delta
F\right\Vert _{L^{2}\left(  A\left(  r_{j},r_{j+1}\right)  \right)  }^{2}.
\]
By summing up over $j=1,...,N$ and taking the square root we arrive at the
second statement.
\end{proof}

\section{Biharmonic interpolation splines}

In this section we want to provide the error estimate for biharmonic
interpolation splines on annular domains, see Theorem \ref{Thm1} in the
introduction. Note that a simple consequence of the error estimate is the
uniqueness of the biharmonic interpolation spline for a given data function
$F.$

We emphasize that the assumption for the data function, namely $F\in
C^{4}(\overline{A\left(  r_{1},r_{N}\right)  }),$ in Theorem \ref{Thm1} is
weaker than the usual assumption for proving the existence of a biharmonic
interpolation spline. Indeed, the first author has proved the existence of a
biharmonic spline interpolating a function $F$ in \cite[p. 446--453]%
{kounchevbook} with Dirichlet boundary conditions (\ref{eqnDirich}) using a
priori estimates for elliptic boundary values problems. For dimension $2$ it
is required that the data functions $F$ are from the fractional Sobolev space
$H^{7/2}\left(  \mathbb{T}\right)  \subset H^{3}\left(  \mathbb{T}\right)  $.
For dimension $2$, A. Bejancu has shown in \cite{Bejan13} the existence of a
biharmonic spline interpolating a function $F$ with Beppo-Levi boundary
conditions and a data function $F$ in the weighted Wiener algebra
$W^{2}\left(  \mathbb{T}\right)  $ (which is less restrictive) using Fourier
series techniques, see also \cite{Bejan11}.

Our approach to the proof for the error estimate of biharmonic interpolation
splines is inspired by the exposition of Carl de Boor in \cite{deBoor} for the
error estimate of cubic interpolation splines which is deduced via error
estimates with piecewise linear functions on $t_{1}<...<t_{N}$ in an iterative
way. The main observation in the intermediate step of the proof in
\cite{deBoor} is that the second derivative of interpolation cubic spline is
the best $L_{2}$ approximation to the second derivative of the interpolated
function. This fact depends on an orthogonality relation between the error and
linear splines. We shall use a similar argument in our context of biharmonic
and harmonic splines and for this we use the following well known fact from
best approximation in Hilbert spaces. For convenience of the reader we include
the short proof:

\begin{proposition}
\label{PropOrthostar}Let $U$ be a subspace of a Hilbert space $H,$ and $f\in
H$. Assume that $\varphi_{0}\in U$ has the property that
\begin{equation}
\left\langle f-\varphi_{0},\varphi\right\rangle =0\text{ for all }\varphi\in
U. \label{eqorthogonal}%
\end{equation}
Then $\left\Vert f-\varphi_{0}\right\Vert \leq\left\Vert f-\varphi\right\Vert
$ for all $\varphi\in U.$
\end{proposition}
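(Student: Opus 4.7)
The plan is to use the standard Pythagoras-type identity that underlies the characterization of orthogonal projections in a Hilbert space. First I would fix an arbitrary $\varphi \in U$ and write the decomposition
\[
f - \varphi = (f - \varphi_{0}) + (\varphi_{0} - \varphi),
\]
observing that $\varphi_{0} - \varphi$ belongs to $U$ because $U$ is a subspace.

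Next I would expand the square of the norm using the inner product of $H$:
\[
\left\Vert f - \varphi \right\Vert^{2} = \left\Vert f - \varphi_{0} \right\Vert^{2} + 2\,\mathrm{Re}\,\langle f - \varphi_{0}, \varphi_{0} - \varphi \rangle + \left\Vert \varphi_{0} - \varphi \right\Vert^{2}.
\]
The orthogonality hypothesis (\ref{eqorthogonal}) applied to the element $\varphi_{0} - \varphi \in U$ makes the middle inner product vanish, so
\[
\left\Vert f - \varphi \right\Vert^{2} = \left\Vert f - \varphi_{0} \right\Vert^{2} + \left\Vert \varphi_{0} - \varphi \right\Vert^{2} \geq \left\Vert f - \varphi_{0} \right\Vert^{2}.
\]
Taking square roots yields the desired inequality $\left\Vert f - \varphi_{0} \right\Vert \leq \left\Vert f - \varphi \right\Vert$ for every $\varphi \in U$.

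There is really no obstacle here: this is the classical "best approximation equals orthogonal projection" argument, and the only subtlety is to note that $\varphi_{0} - \varphi$ is itself an admissible test element of $U$, which is exactly why the hypothesis that $U$ be a subspace (not merely a subset) is invoked.
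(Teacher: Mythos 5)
Your proof is correct and is essentially the same argument as the paper's: both rest on the Pythagoras identity obtained by expanding $\left\Vert f-\varphi\right\Vert^{2}$ and killing the cross term via the orthogonality hypothesis applied to an element of $U$ (you test against $\varphi_{0}-\varphi$ directly, while the paper tests against $\varphi$ and then reparametrizes $\varphi\mapsto\varphi_{0}+\varphi$ at the end; the two are equivalent). Your explicit $2\,\mathrm{Re}\langle\cdot,\cdot\rangle$ is in fact slightly more careful than the paper's real-looking expansion, given that $H$ may be a complex Hilbert space.
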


\begin{proof}
Put $g=f-\varphi_{0}.$ Due to the orthogonality condition (\ref{eqorthogonal})
we have
\[
\left\Vert g-\varphi\right\Vert ^{2}=\left\Vert g\right\Vert ^{2}%
-2\left\langle g,\varphi\right\rangle +\left\Vert \varphi\right\Vert
^{2}=\left\Vert g\right\Vert ^{2}+\left\Vert \varphi\right\Vert ^{2}%
\geq\left\Vert g\right\Vert ^{2}.
\]
It follows that $\left\Vert g\right\Vert =\left\Vert f-\varphi_{0}\right\Vert
\leq\left\Vert f-\varphi_{0}-\varphi\right\Vert .$ Since $U$ is a subspace we
can replace $\varphi\in U$ by $\varphi_{0}+\varphi\in U$, and the proof is finished.
\end{proof}

Let us recall the main result of the paper stated in the introduction:

\begin{theorem}
Let $r_{1}<...<r_{N}$ and let $F\in C^{4}(\overline{A\left(  r_{1}%
,r_{N}\right)  })$. Assume that $I_{4}\left(  F\right)  $ is a biharmonic
spline for the partition $A\left(  r_{1},r_{2}\right)  ,....,A\left(
r_{N-1},r_{N}\right)  $ which satisfies the transfinite interpolation
conditions
\[
I_{4}\left(  x\right)  =F\left(  x\right)  \text{ for all }x\text{ with
}\left\vert x\right\vert =r_{j}\text{ and }j=1,...,N,
\]
and
\begin{equation}
\frac{\partial I_{4}\left(  F\right)  }{\partial n}\left(  x\right)
=\frac{\partial F\left(  x\right)  }{\partial n}\text{ for all }\left\vert
x\right\vert =r_{1}\text{ and for all }\left\vert x\right\vert =r_{N}.
\label{eqnDirich}%
\end{equation}
Then the following error estimate holds:
\[
\left\Vert F-I_{4}\left(  F\right)  \right\Vert _{L^{2}\left(  A\left(
r_{1},r_{N}\right)  \right)  }\leq\left(  C_{d}\right)  ^{2}\max
_{j=1,...,N-1}\left\vert r_{j+1}-r_{j}\right\vert ^{4}\left\Vert \Delta
^{2}F\right\Vert _{L^{2}\left(  A\left(  r_{1},r_{N}\right)  \right)  }.
\]
where $C_{d}=\max\left\{  \frac{1}{2d},\frac{1}{8}\right\}  .$
\end{theorem}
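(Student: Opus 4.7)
The plan is to mimic de~Boor's iterative argument for the $L^{2}$ error of cubic splines, applying the harmonic-spline $L^{2}$ estimate of Theorem~\ref{ThmMainHarmSpAnn} twice. Write $e = F - I_{4}(F)$. By the interpolation condition (\ref{eqFinterpolating}), $e$ vanishes on every sphere $|x|=r_{j}$, hence on the full boundary of each sub-annulus $A(r_{j},r_{j+1})$, and since $I_{4}(F) \in C^{2}(\overline{A(r_{1},r_{N})})$ we have $e \in C^{2}(\overline{A(r_{j},r_{j+1})})$. Applying Theorem~\ref{ThmMainSecondS2} with $p=q=2$ on each sub-annulus, using the bound $c_{d}(A(r_{j},r_{j+1})) \leq C_{d}(r_{j+1}-r_{j})^{2}$ of Theorem~\ref{ThmAnnulus}, then squaring, summing over $j$, and taking square roots, yields
\[
\|e\|_{L^{2}(A(r_{1},r_{N}))} \leq C_{d}\,\max_{j}(r_{j+1}-r_{j})^{2}\,\|\Delta e\|_{L^{2}(A(r_{1},r_{N}))}.
\]

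The second step is to control $\|\Delta e\|_{L^{2}}$ in terms of $\|\Delta^{2}F\|_{L^{2}}$. Observe that $\Delta I_{4}(F)$ is itself a harmonic spline for the partition: it is harmonic on each sub-annulus (because $I_{4}(F)$ is biharmonic there) and continuous on $\overline{A(r_{1},r_{N})}$ (because $I_{4}(F)\in C^{2}$). I then invoke the orthogonality relation to be established in Section~4, namely that
\[
\int_{A(r_{1},r_{N})}\Delta e\cdot\varphi\,dx = 0
\]
for every harmonic spline $\varphi$ for the partition. This identity is obtained by applying Green's second identity on each sub-annulus: the volume term $\int e\,\Delta\varphi$ drops because $\varphi$ is harmonic there; the piece $\int e\,\partial\varphi/\partial n\,d\sigma$ drops because $e$ vanishes on the internal spheres; the internal boundary pieces of $\int (\partial e/\partial n)\,\varphi\,d\sigma$ telescope to zero since $\partial e/\partial n$ is continuous across the spheres (as $e\in C^{2}$) and so is $\varphi$; and the outer boundary pieces at $|x|=r_{1}, r_{N}$ vanish precisely by the Hermite condition (\ref{eqFcomplete}).

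By Proposition~\ref{PropOrthostar}, the orthogonality says that $\Delta I_{4}(F)$ is the best $L^{2}$-approximation of $\Delta F$ among all harmonic splines for the partition. In particular, letting $I_{2}(\Delta F)$ denote the harmonic-spline interpolant of $\Delta F$ on the spheres $|x|=r_{j}$ (which exists since $\Delta F\in C^{2}(\overline{A(r_{1},r_{N})})$ and each annulus is a regular domain), we have
\[
\|\Delta e\|_{L^{2}} = \|\Delta F - \Delta I_{4}(F)\|_{L^{2}} \leq \|\Delta F - I_{2}(\Delta F)\|_{L^{2}}.
\]
A second application of Theorem~\ref{ThmMainHarmSpAnn}, now to $\Delta F$, bounds the right-hand side by $C_{d}\,\max_{j}(r_{j+1}-r_{j})^{2}\,\|\Delta^{2}F\|_{L^{2}}$; chaining with the first step produces the constant $C_{d}^{2}\,\max_{j}(r_{j+1}-r_{j})^{4}$ and completes the estimate.

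The main obstacle is the orthogonality relation in the second step. One must track signs in Green's identity carefully and verify that the Hermite data (\ref{eqFcomplete}) is exactly what is needed to eliminate the two outer boundary terms, while the $C^{2}$-matching of $I_{4}(F)$ across internal spheres produces the telescoping cancellation there. Once this is granted, the two harmonic-spline inequalities compose cleanly, and the whole argument becomes a transparent multivariate analogue of de~Boor's classical iterative proof, with the harmonic Laplacian playing the role of the second derivative and the Hermite conditions on $\partial A(r_{1},r_{N})$ playing the role of the derivative boundary conditions for the cubic spline.
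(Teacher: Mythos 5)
Your overall architecture is exactly the paper's: the two-step iteration (harmonic $L^{2}$ estimate applied to $F-I_{4}(F)$, then the orthogonality relation plus Proposition \ref{PropOrthostar} to replace $\Delta I_{4}(F)$ by the interpolating harmonic spline $I_{2}(\Delta F)$, then the harmonic $L^{2}$ estimate again) is precisely how Section 3 proceeds, and the constants compose in the same way. The gap is in how you propose to prove the orthogonality relation. Your Green's-identity argument on each sub-annulus is the ``natural approach'' that the paper explicitly attempts in Section 4 and then abandons, for a concrete reason: a harmonic spline $\varphi$ for the partition is only required to be continuous on the closed annulus and harmonic on each open sub-annulus, so its gradient may be unbounded as one approaches a sphere $|x|=r_{j}$ from inside $A(r_{j},r_{j+1})$. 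Green's identity must therefore be applied on a slightly shrunken annulus $A(\rho_{j},\rho_{j+1})$ and a limit taken; in that limit the term $\int_{S_{\rho}} e\,\partial\varphi/\partial n\,d\sigma$ is not controlled. The fact that $e$ vanishes \emph{on} the sphere $|x|=r_{j}$ only gives $e=O(\rho-r_{j})$ on nearby spheres, which need not beat a blowing-up $\partial\varphi/\partial n$, so it is unclear that this boundary term has a limit, let alone that the limit is zero. Your assertion that this piece ``drops because $e$ vanishes on the internal spheres'' is exactly the step that fails.

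The paper's actual proof of Theorem \ref{Thm3a} avoids this by expanding everything in spherical harmonics: the Fourier--Laplace coefficient $\varphi_{k,\ell}(r)$ of a harmonic spline solves the Euler-type ODE $L_{k}\varphi_{k,\ell}=0$ on each radial subinterval and is therefore analytic up to the endpoints $r_{j}$, even though $\varphi$ itself may have unbounded gradient there. Integration by parts is then performed in the single radial variable on each coefficient, where the vanishing of $f_{k,\ell}(r_{j})$ and the boundedness of $h_{k,\ell}'$ legitimately kill the problematic term, the interior contributions telescope, and the Hermite conditions (\ref{eqFcomplete}) kill the two outer terms, as you anticipated. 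So your identification of \emph{which} boundary data kills \emph{which} terms is right, but the analytic vehicle you chose to carry the computation does not survive the regularity actually available for $\varphi$; Theorem \ref{ThmLk} and the radial reduction are the missing ingredient.
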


\begin{proof}
At first we note that
\[
\left\Vert F-I_{4}\left(  F\right)  \right\Vert _{L^{2}\left(  A\left(
r_{1},r_{N}\right)  \right)  }^{2}=%
{\displaystyle\sum_{j=1}^{N-1}}
\left\Vert F-I_{4}\left(  F\right)  \right\Vert _{L^{2}\left(  A\left(
r_{j},r_{j+1}\right)  \right)  }^{2}.
\]
We apply (\ref{eqineqL2}) to the function $f=F-I_{4}\left(  F\right)  $ on the
annular domain $A\left(  r_{j},r_{j+1}\right)  $: note that $f$ vanishes for
any $x$ with $\left\vert x\right\vert =r_{j}$ for $j=1,...,N,$ hence%
\[
\left\Vert F-I_{4}\left(  F\right)  \right\Vert _{L^{2}\left(  A\left(
r_{j},r_{j+1}\right)  \right)  }^{2}\leq C_{d}^{2}\left(  r_{j+1}%
-r_{j}\right)  ^{4}\left\Vert \Delta F-\Delta\left(  I_{4}\left(  F\right)
\right)  \right\Vert _{L^{2}\left(  A\left(  r_{j},r_{j+1}\right)  \right)
}^{2}.
\]
By summing up and taking the square root we see that
\[
\left\Vert F-I_{4}\left(  F\right)  \right\Vert _{L^{2}\left(  A\left(
r_{1},r_{N}\right)  \right)  }\leq C_{d}\max_{j=1,...,N-1}\left(
r_{j+1}-r_{j}\right)  ^{2}\cdot\left\Vert \Delta F-\Delta\left(  I_{4}\left(
F\right)  \right)  \right\Vert _{L^{2}\left(  A\left(  r_{1},r_{N}\right)
\right)  }.
\]
Now we want to estimate the right hand side: let us put $f_{0}=\Delta F$ and
$\varphi_{0}=\Delta\left(  I_{4}\left(  F\right)  \right)  .$ Note that
$\varphi_{0}$ is a harmonic spline -- but unfortunately it does not
interpolate the function $f_{0},$ so we cannot repeat the error estimate for
interpolating harmonic splines. In Theorem \ref{Thm3a} below we prove that the
equality
\[
\left\langle f_{0}-\varphi_{0},\varphi\right\rangle _{L^{2}\left(  A\left(
r_{1},r_{N}\right)  \right)  }=0
\]
holds for all harmonic splines $\varphi$ for the partition $A\left(
r_{1},r_{2}\right)  ,....,A\left(  r_{N-1},r_{N}\right)  $. Then Proposition
\ref{PropOrthostar} implies that
\[
\left\Vert \Delta F-\Delta\left(  I_{4}\left(  F\right)  \right)  \right\Vert
_{L^{2}\left(  A\left(  r_{1},r_{N}\right)  \right)  }=\left\Vert
f_{0}-\varphi_{0}\right\Vert _{L^{2}\left(  A\left(  r_{1},r_{N}\right)
\right)  }\leq\left\Vert f_{0}-\varphi\right\Vert _{L^{2}\left(  A\left(
r_{1},r_{N}\right)  \right)  }%
\]
holds for all harmonic splines $\varphi$ for the partition $A\left(
r_{1},r_{2}\right)  ,....,A\left(  r_{N-1},r_{N}\right)  .$ Let us take the
harmonic spline $\varphi:=I_{2}\left(  \Delta F\right)  .$ Hence,
\[
\left\Vert f_{0}-\varphi\right\Vert _{L^{2}\left(  A\left(  r_{1}%
,r_{N}\right)  \right)  }=\left\Vert \Delta F-I_{2}\left(  \Delta F\right)
\right\Vert _{L^{2}\left(  A\left(  r_{1},r_{N}\right)  \right)  }%
\]
is the error for the harmonic spline $\varphi$ interpolating $\Delta F,$ and
by Theorem \ref{ThmMainHarmSpAnn} we obtain
\[
\left\Vert \Delta F-I_{2}\left(  \Delta F\right)  \right\Vert _{L^{2}\left(
A\left(  r_{1},r_{N}\right)  \right)  }\leq C_{d}\max_{j=1,...,N-1}\left(
r_{j+1}-r_{j}\right)  ^{2}\left\Vert \Delta^{2}F\right\Vert _{L^{2}\left(
A\left(  r_{1},r_{N}\right)  \right)  }.
\]
It follows that
\[
\left\Vert F-I_{4}\left(  F\right)  \right\Vert _{L^{2}\left(  A\left(
r_{1},r_{N}\right)  \right)  }\leq C_{d}^{2}\max_{j=1,...,N-1}\left(
r_{j+1}-r_{j}\right)  ^{4}\cdot\left\Vert \Delta^{2}F\right\Vert
_{L^{2}\left(  A\left(  r_{1},r_{N}\right)  \right)  }.
\]
This ends the proof.
\end{proof}

\section{Proof of the orthogonality relation}

In this section we want to prove the following result:

\begin{theorem}
\label{Thm3a}Let $r_{1}<...<r_{N}$ and let $F\in C^{4}\left(  \overline
{\Omega}\right)  $ and $I_{4}\left(  F\right)  $ as in Theorem \ref{Thm1}.
Then for all harmonic splines $\varphi$ for the partition $A\left(
r_{j},r_{j+1}\right)  $ for $j=1,...,N$ $\mathbb{\ }$
\begin{equation}%
{\displaystyle\int_{A\left(  r_{1},r_{n}\right)  }}
\left(  \Delta F\left(  x\right)  -\Delta I_{4}\left(  F\right)  \left(
x\right)  \right)  \cdot\varphi\left(  x\right)  dx=0. \label{eqorthogo}%
\end{equation}

\end{theorem}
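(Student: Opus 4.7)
The plan is to prove the orthogonality relation by applying Green's second identity on each annular piece $A(r_j,r_{j+1})$ and showing that all boundary terms cancel. Let me set $g := F - I_4(F)$, so that $\Delta g = \Delta F - \Delta I_4(F)$ is what multiplies $\varphi$ in the integrand. The key structural facts I will exploit are:
\begin{itemize}
\item $g\in C^2(\overline{A(r_1,r_N)})$, since $F\in C^4$ and $I_4(F)$ is a biharmonic spline (hence $C^2$ on the closure by definition);
\item $g\equiv 0$ on every sphere $|x|=r_j$ for $j=1,\dots,N$ (transfinite interpolation condition \eqref{eqFinterpolating});
\item $\partial g/\partial n\equiv 0$ on the outer spheres $|x|=r_1$ and $|x|=r_N$ (the Hermite-type boundary condition \eqref{eqFcomplete});
\item $\varphi$ is harmonic on each $A(r_j,r_{j+1})$ and continuous across the interior spheres $|x|=r_j$ for $2\le j\le N-1$ (definition of harmonic spline).
\end{itemize}

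First I would split the integral as $\int_{A(r_1,r_N)}\Delta g\cdot\varphi\, dx = \sum_{j=1}^{N-1}\int_{A(r_j,r_{j+1})}\Delta g\cdot\varphi\, dx$. On each annular piece both $g$ and $\varphi$ are $C^2$ up to the boundary, so Green's second identity applies. Since $\Delta\varphi=0$ on $A(r_j,r_{j+1})$, it yields
\[
\int_{A(r_j,r_{j+1})}\varphi\,\Delta g\, dx \;=\; \int_{\partial A(r_j,r_{j+1})}\!\left(\varphi\,\tfrac{\partial g}{\partial n_j}-g\,\tfrac{\partial \varphi}{\partial n_j}\right)d\sigma,
\]
where $n_j$ denotes the outward unit normal to $A(r_j,r_{j+1})$. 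Since $g$ vanishes on every sphere $|x|=r_j$, the second term drops out entirely; only $\int \varphi\,\partial g/\partial n_j\, d\sigma$ survives on each boundary component.

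Next I would sum over $j$ and collect the contributions at each sphere $|x|=r_j$. For an interior sphere ($2\le j\le N-1$), it appears twice: once as the outer boundary of $A(r_{j-1},r_j)$ (with outward normal $+x/|x|$) and once as the inner boundary of $A(r_j,r_{j+1})$ (with outward normal $-x/|x|$). Since $g\in C^1$ across this sphere, the radial derivative $\partial g/\partial r$ takes a single value there; and since $\varphi$ is continuous across it, the two contributions involving $\varphi\,\partial g/\partial n_j$ have opposite signs and equal magnitudes, so they cancel. (This is where the weaker regularity $\varphi\in C$, as opposed to $\varphi\in C^1$, is precisely sufficient.) For the outer spheres $|x|=r_1$ and $|x|=r_N$, the surviving boundary integral is $\int \varphi\,\partial g/\partial n\, d\sigma$, and $\partial g/\partial n=0$ there by \eqref{eqFcomplete}, so these terms vanish as well.

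Combining everything yields $\int_{A(r_1,r_N)}(\Delta F-\Delta I_4(F))\varphi\, dx=0$, which is \eqref{eqorthogo}. I do not anticipate a genuine obstacle here; the only place requiring care is sign-bookkeeping at the interior spheres, and the crucial point is recognizing that the Hermite boundary condition \eqref{eqFcomplete} is exactly what is needed to kill the two outer boundary terms after the $g$-terms have already dropped due to interpolation.
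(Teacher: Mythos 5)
Your proposed argument is exactly the ``natural approach'' that the paper describes and then explicitly rejects, and the reason it fails is a regularity gap in your second step. You assert that on each annular piece ``both $g$ and $\varphi$ are $C^{2}$ up to the boundary, so Green's second identity applies.'' For $g=F-I_{4}(F)$ this is true, but for $\varphi$ it is not: by definition a harmonic spline is only required to be continuous on $\overline{A(r_{1},r_{N})}$ and harmonic in each open annulus $A(r_{j},r_{j+1})$. Such a $\varphi$ is real-analytic in the interior of each piece but need not have a normal derivative on the spheres $|x|=r_{j}$, and its gradient may blow up as one approaches them. Consequently the boundary term $\int_{S_{r_{j}}} g\,\frac{\partial\varphi}{\partial n}\,d\sigma$ that you propose to ``drop because $g=0$ there'' is not even well defined; and if one instead applies Green's identity on a slightly shrunken annulus $A(\rho_{j},\rho_{j+1})$ and lets $\rho_{j}\to r_{j}$, the term $\int_{S_{\rho_{j}}} g\,\frac{\partial\varphi}{\partial n}\,d\sigma$ need not tend to $0$, since $g$ does not vanish at radius $\rho_{j}$ while $\partial\varphi/\partial n$ may be unbounded. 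Your parenthetical remark that continuity of $\varphi$ is ``precisely sufficient'' at the interior spheres does not address this: the cancellation you describe there uses only $\varphi\,\partial g/\partial n$, but you cannot reach that stage without first justifying the application of Green's identity up to the boundary, which requires control of $\partial\varphi/\partial n$.

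The paper circumvents this by expanding everything in spherical harmonics. The Fourier--Laplace coefficients $\varphi_{k,\ell}(r)$ of the harmonic spline satisfy the ODE $L_{k}\varphi_{k,\ell}=0$ on each interval $(r_{j},r_{j+1})$, hence are linear combinations of explicit power (or logarithmic) solutions and extend analytically to the closed subintervals. The orthogonality is then proved coefficientwise by one-dimensional integration by parts, where all boundary evaluations are legitimate, and the interpolation and Hermite boundary conditions on $f_{k,\ell}$ kill the boundary terms exactly as in your bookkeeping. To repair your proof you would either have to adopt this route or add a genuine argument (not present in your sketch) controlling the boundary behaviour of $\nabla\varphi$ near the spheres.
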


Let us write $f\left(  x\right)  =F\left(  x\right)  -I_{4}\left(  F\right)
\left(  x\right)  $. Then $f\in C^{4}(\overline{A\left(  r_{1},r_{N}\right)
})$ and $\varphi\in C^{2}\left(  A\left(  r_{1},r_{N}\right)  \right)  \cap
C(\overline{A\left(  r_{1},r_{N}\right)  }).$ Clearly we have
\[%
{\displaystyle\int_{A\left(  r_{1},r_{n}\right)  }}
\Delta f\left(  x\right)  \cdot\varphi\left(  x\right)  dx=%
{\displaystyle\sum_{j=1}^{n-1}}
{\displaystyle\int_{A\left(  r_{j},r_{j+1}\right)  }}
\Delta f\left(  x\right)  \cdot\varphi\left(  x\right)  dx.
\]
A natural approach is to prove (\ref{eqorthogo}) by appling Green's formula
(see \cite[p. 307]{ArGa}) to each summand on the right hand side. In order to
apply Green's formula, let us take $\rho_{j}<\rho_{j+1}$ in the interval
$\left(  r_{j},r_{j+1}\right)  $. Then $\varphi$ and $f$ are twice
differentiable in a neighborhood of $\overline{A\left(  \rho_{j},\rho
_{j+1}\right)  }\subset A\left(  r_{j},r_{j+1}\right)  $ and we apply Green's
formula. Since $\varphi$ is harmonic we obtain that
\begin{equation}%
{\displaystyle\int_{A\left(  \rho_{j},\rho_{j+1}\right)  }}
\Delta f\left(  x\right)  \cdot\varphi\left(  x\right)  dx=R\left(  \rho
_{j+1}\right)  -R\left(  \rho_{j}\right)  \label{eqGreen1}%
\end{equation}
where $R\left(  \rho\right)  $ is defined by
\[
R\left(  \rho\right)  =%
{\displaystyle\int_{_{S_{\rho}}}}
\frac{\partial f}{\partial n}\left(  y\right)  \varphi\left(  y\right)
d\sigma_{\rho}\left(  y\right)  -%
{\displaystyle\int_{_{S_{\rho}}}}
f\left(  y\right)  \frac{\partial\varphi}{\partial n}\left(  y\right)
d\sigma_{\rho}\left(  y\right)  ,
\]
and $\partial/\partial n$ denotes the exterior normal derivative, and
$\sigma_{\rho}$ is the surface measure on the sphere $S_{\rho}:=\left\{
x\in\mathbb{R}^{d}:\left\vert x\right\vert =\rho\right\}  $ for $j=1,2.$ Now
we want to take limits $\rho_{j}\rightarrow r_{j}$ and $\rho_{j+1}\rightarrow
r_{j+1}$ in (\ref{eqGreen1}). The limit for the left hand side clearly exists,
so
\[%
{\displaystyle\int_{A\left(  r_{1},r_{N}\right)  }}
\Delta f\left(  x\right)  \cdot\varphi\left(  x\right)  dx=%
{\displaystyle\sum_{j=1}^{N-1}}
\left(  \lim_{\rho_{j+1}<r_{j+1},\rho_{j+1}\rightarrow r_{j+1}}R\left(
\rho_{j+1}\right)  -\lim_{\rho_{j}>r_{j}\rho_{j}\rightarrow r_{j}}R\left(
r_{j}\right)  \right)  .
\]
We know that $f\left(  y\right)  $ vanishes for $\left\vert y\right\vert
=\rho_{j}$ but it seems to be unclear whether the expression
\[%
{\displaystyle\int_{_{S_{\rho}}}}
f\left(  y\right)  \frac{\partial\varphi}{\partial n}\left(  y\right)
d\sigma_{\rho}\left(  y\right)
\]
for $\rho\rightarrow r_{j}$ has a limit, and whether this limit is $0$ (as we
would expect) since the harmonic spline $\varphi\left(  y\right)  $ may have
an unbounded gradient for $y>r_{j}.$Thus this approach unfortunately does not
provide a proof of the statement, and we will pursue a proof of Theorem
\ref{Thm3a} using facts from the theory of spherical harmonics.

We shall write $x\in\mathbb{R}^{d}$ in spherical coordinates $x=r\theta$ with
$\theta\in\mathbb{S}^{d-1}=\left\{  x\in\mathbb{R}^{d}:\left\vert x\right\vert
=1\right\}  .$ Let $d\theta$ be the surface measure of $\mathbb{S}^{d-1}$ and
define the inner product
\begin{equation}
\left\langle f,g\right\rangle _{\mathbb{S}^{d-1}}:=\int_{\mathbb{S}^{d-1}%
}f\left(  \theta\right)  \overline{g\left(  \theta\right)  }d\theta.
\label{eqinnerpr}%
\end{equation}
Let $\mathcal{H}_{k}\left(  \mathbb{R}^{d}\right)  $ be the set of all
harmonic homogeneous complex-valued polynomials of degree $k.$ Then
$f\in\mathcal{H}_{k}\left(  \mathbb{R}^{d}\right)  $ is called a \emph{solid
harmonic} and the restriction of $f$ to $\mathbb{S}^{d-1}$ a \emph{spherical
harmonic} of degree $k$ and we set $a_{k}:=\dim\mathcal{H}_{k}\left(
\mathbb{R}^{d}\right)  ,$ see \cite{steinWeiss}, \cite{kounchevbook} for details.

Assume that $Y_{k,\ell}:\mathbb{R}^{d}\rightarrow\mathbb{R},\ell=1,...,a_{k},$
is an \emph{orthonormal basis} of $\mathcal{H}_{k}\left(  \mathbb{R}%
^{d}\right)  $ with respect to (\ref{eqinnerpr}). Since $Y_{k,\ell}$ is
homogeneous of degree $k$ we have $Y_{k,\ell}\left(  x\right)  =r^{k}Y_{k\ell
}\left(  \theta\right)  $ for $x=r\theta.$ For a continuous function
$f:\overline{A\left(  r_{1},r_{N}\right)  }\rightarrow\mathbb{C}$ we define
the \emph{Fourier-Laplace coefficient }$f_{k,\ell}\left(  r\right)  $%
\emph{\ }for $r\in\left[  r_{1},r_{N}\right]  $ by
\begin{equation}
f_{k,\ell}\left(  r\right)  =\int_{\mathbb{S}^{d-1}}f\left(  r\theta\right)
Y_{k,\ell}\left(  \theta\right)  d\theta. \label{eqfourier}%
\end{equation}
The \emph{Fourier-Laplace series }of $f:A\left(  a,b\right)  \rightarrow
\mathbb{C}$ is defined by the formal expansion
\begin{equation}
f\left(  r\theta\right)  =\sum_{k=0}^{\infty}\sum_{\ell=1}^{a_{k}}f_{k,\ell
}\left(  r\right)  Y_{k,\ell}\left(  \theta\right)  . \label{frtheta}%
\end{equation}
If $\theta\longmapsto f\left(  r\theta\right)  $ is a continuous, it is a
function in $L^{2}\left(  \mathbb{S}^{d-1}\right)  $ and since $\left(
Y_{k,\ell}\left(  \theta\right)  \right)  _{k\in\mathbb{N}_{0},l=1,...a_{k}}$
is complete orthonormal basis one has
\[
\int_{\mathbb{S}^{d-1}}\left\vert f\left(  r\theta\right)  \right\vert
^{2}d\theta=\sum_{k=0}^{\infty}\sum_{\ell=1}^{a_{k}}\left\vert f_{k,\ell
}\left(  r\right)  \right\vert ^{2}.
\]
If $f,g$ are continuous on $\overline{A\left(  r_{1},r_{N}\right)  }$ with
Fourier-Laplace coeffients $f_{k,\ell}\left(  r\right)  $ and $g_{k,\ell
}\left(  r\right)  $ respectively we obtain
\[
\int_{\mathbb{S}^{d-1}}f\left(  r\theta\right)  g\left(  r\theta\right)
d\theta=\sum_{k=0}^{\infty}\sum_{\ell=1}^{a_{k}}f_{k,\ell}\left(  r\right)
g_{k,\ell}\left(  r\right)
\]
and the series on the right hand side converges absolutely. Multiply this
equation with $r^{d-1}$ and integrate with respect to $dr,$ hence we have
\begin{align}
\int_{A\left(  r_{1},r_{N}\right)  }f\left(  x\right)  g\left(  x\right)  dx
&  =\int_{r_{1}}^{r_{N}}\int_{\mathbb{S}^{d-1}}f\left(  r\theta\right)
g\left(  r\theta\right)  r^{d-1}d\theta dr\label{eqscalarproduct}\\
&  =\sum_{k=0}^{\infty}\sum_{\ell=1}^{a_{k}}\int_{r_{1}}^{r_{N}}f_{k,\ell
}\left(  r\right)  g_{k,\ell}\left(  r\right)  r^{d-1}dr.
\label{eqscalarproduct2}%
\end{align}
Let us define the univariate differential operators
\[
L_{k}\left(  f\right)  =\frac{\partial^{2}}{\partial^{2}r}f+\frac{d-1}{r}%
\frac{\partial}{\partial r}f-\frac{k\left(  k+d-2\right)  }{r^{2}}f.
\]
The following result is now crucial for our arguments. Since we could not find
a reference for this result we include a proof (in \cite{kounchevbook} it is
proved under the stronger assumption that $\Delta f$ has a absolutely
convergent Fourier-Laplace series).

\begin{theorem}
\label{ThmLk}Let $a<b$ be real numbers and assume that $f:A\left(  a,b\right)
\rightarrow\mathbb{C}$ is continuously partially differentiable of order $2.$
Then the Fourier-Laplace coefficient $f_{k,\ell}$ of $f$ is twice
differentiable on $\left(  a,b\right)  $ and%
\[
\int_{\mathbb{S}^{d-1}}\left(  \Delta f\right)  \left(  r\theta\right)  \cdot
Y_{k,\ell}\left(  \theta\right)  d\theta=L_{k}\left(  f_{k,\ell}\right)
\left(  r\right)  .
\]
Thus the $\left(  k,\ell\right)  $-th Fourier Laplace coffiecient of $\Delta
f$ is equal to $L_{k}\left(  f_{k,\ell}\right)  .$
\end{theorem}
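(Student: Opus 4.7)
The plan is to reduce the identity to a one-dimensional calculation via Green's second identity in $\mathbb{R}^d$, using the solid harmonic extension $\widetilde{Y}(x) := |x|^k Y_{k,\ell}(x/|x|)$, which is a homogeneous harmonic polynomial on all of $\mathbb{R}^d$ and satisfies $\Delta \widetilde{Y} = 0$.

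As a preliminary step, I would show $f_{k,\ell} \in C^2(a,b)$ by differentiating under the integral sign in (\ref{eqfourier}); this is standard since $f \in C^2$ and $\mathbb{S}^{d-1}$ is compact, and it produces the expected formulas with $\partial_r f$ and $\partial_r^2 f$ in the integrand. Next, for fixed $r \in (a,b)$ and small $\varepsilon > 0$, I would apply Green's second identity to $f$ and $\widetilde{Y}$ on the shell $A(r-\varepsilon, r+\varepsilon)$. Since $\Delta \widetilde{Y} = 0$ and both functions are $C^2$ on the closed shell, no regularity issues arise. Passing to polar coordinates, the volume integral becomes $\int_{r-\varepsilon}^{r+\varepsilon} \rho^{k+d-1} (\Delta f)_{k,\ell}(\rho)\, d\rho$, while the boundary integrals collapse to the values of $\rho^{k+d-1} f_{k,\ell}'(\rho) - k\rho^{k+d-2} f_{k,\ell}(\rho)$ at $\rho = r \pm \varepsilon$; the two summands come from $(\partial_n f)\widetilde{Y}$ and $f(\partial_n \widetilde{Y})$ respectively, using that $\widetilde{Y}$ is radially homogeneous of degree $k$ so that $\partial_n \widetilde{Y} = k\rho^{k-1}Y_{k,\ell}(\theta)$ on $S_\rho$.

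Dividing both sides by $2\varepsilon$ and letting $\varepsilon \to 0$ converts the identity into a pointwise one: the left side tends to $r^{k+d-1} (\Delta f)_{k,\ell}(r)$ by continuity of the integrand, and the right side to the $\rho$-derivative at $\rho = r$ of the bracketed boundary expression. Expanding by the product rule and dividing through by $r^{k+d-1}$ produces $f_{k,\ell}''(r) + \frac{d-1}{r} f_{k,\ell}'(r) - \frac{k(k+d-2)}{r^2} f_{k,\ell}(r)$, which is exactly $L_k(f_{k,\ell})(r)$.

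The main obstacle, rather modest, is the algebraic bookkeeping in this last step: tracking powers of $\rho$ so that the coefficient $k(k+d-2)$ of the zeroth-order term emerges correctly (it arises from differentiating the $k\rho^{k+d-2}$ factor, producing $k(k+d-2)\rho^{k+d-3}$). An alternative route, avoiding Green's identity in $\mathbb{R}^d$, would be to use the polar decomposition $\Delta = \partial_r^2 + \frac{d-1}{r}\partial_r + \frac{1}{r^2}\Delta_{\mathbb{S}^{d-1}}$ and invoke self-adjointness of $\Delta_{\mathbb{S}^{d-1}}$ on the closed manifold $\mathbb{S}^{d-1}$ together with the eigenvalue equation $\Delta_{\mathbb{S}^{d-1}} Y_{k,\ell} = -k(k+d-2) Y_{k,\ell}$; this is shorter but relies on more machinery from spherical harmonic analysis.
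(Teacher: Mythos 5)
Your proposal is correct and follows essentially the same route as the paper: apply Green's second identity on a thin spherical shell with the solid harmonic $|x|^{k}Y_{k,\ell}(x/|x|)$ as the harmonic partner, identify the boundary contribution with $\rho^{k+d-1}f_{k,\ell}'(\rho)-k\rho^{k+d-2}f_{k,\ell}(\rho)$, shrink the shell to recover a pointwise identity, and expand the derivative of this expression to obtain $L_{k}(f_{k,\ell})$. The only cosmetic difference is your symmetric shell $A(r-\varepsilon,r+\varepsilon)$ versus the paper's one-sided $A(r,r+h)$, which changes nothing in substance.
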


\begin{proof}
It is easy to see that $f_{k,\ell}$ is twice differentiable on $\left(
a,b\right)  $. Assume now that $\varphi\left(  x\right)  =Y_{k,\ell}\left(
x\right)  $. Since $Y_{k,\ell}\left(  r\theta\right)  =r^{k}Y_{k,\ell}\left(
\theta\right)  $ for all $\theta\in\mathbb{S}^{d-1},$ it follows that
\begin{align*}
R_{r}  &  :=%
{\displaystyle\int_{_{S_{r}}}}
\frac{\partial}{\partial r}f\left(  r\theta\right)  \cdot r^{k}Y_{k,\ell
}\left(  \theta\right)  d\sigma_{r}\left(  y\right)  -%
{\displaystyle\int_{_{S_{r}}}}
f\left(  r\theta\right)  \frac{\partial}{\partial r}\left(  r^{k}Y_{k,\ell
}\left(  \theta\right)  \right)  d\sigma_{r}\left(  y\right) \\
&  =r^{k+d-1}\frac{\partial}{\partial r}\int_{\mathbb{S}^{d-1}}f\left(
r\theta\right)  Y_{k,\ell}\left(  \theta\right)  d\sigma-r^{d-1}%
\int_{\mathbb{S}^{d-1}}f\left(  r\theta\right)  \frac{\partial}{\partial
r}\left(  r^{k}Y_{k,\ell}\left(  \theta\right)  \right)  d\sigma\\
&  =r^{k+d-1}\frac{\partial}{\partial r}f_{k,\ell}\left(  r\right)
-kr^{k+d-2}f_{k,\ell}\left(  r\right)  =:F\left(  r\right)  .
\end{align*}
Let us now take $r_{1}=r$ and $r_{2}=r+h$, then by Green's formula (see above)
we have
\[
\frac{1}{h}%
{\displaystyle\int_{A\left(  r,r+h\right)  }}
\Delta f\left(  x\right)  \cdot Y_{k,\ell}\left(  x\right)  dx=\frac{F\left(
r+h\right)  -F\left(  r\right)  }{h}.
\]
We can take the limit on the right hand side and we see that
\[
F^{\prime}\left(  r\right)  =r^{k+d-1}\frac{\partial}{\partial r}f_{k,\ell
}\left(  r\right)  +\left(  d-1\right)  r^{k+d-2}\frac{\partial}{\partial
r}f_{k,\ell}\left(  r\right)  -k\left(  k+d-2\right)  r^{k+d-3}f_{k,\ell
}\left(  r\right)  .
\]
On the other hand,
\[
\frac{1}{h}%
{\displaystyle\int_{A\left(  r,r+h\right)  }}
\Delta f\left(  x\right)  \cdot Y_{k,\ell}\left(  x\right)  dx=\frac{1}{h}%
\int_{r}^{r+h}\left(  \int_{\mathbb{S}^{d-1}}\Delta f\left(  r\theta\right)
Y_{k,\ell}\left(  \theta\right)  d\sigma\right)  r^{k+d-1}dr
\]
converges to
\[
\left(  \int_{\mathbb{S}^{d-1}}\Delta f\left(  r\theta\right)  Y_{k,\ell
}\left(  \theta\right)  d\sigma\right)  r^{k+d-1}.
\]
The proof is complete.
\end{proof}

\textbf{Proof of Theorem \ref{Thm3a}}:

\begin{proof}
Let $f:=F-I_{4}F\in C^{2}(\overline{A\left(  r_{1},r_{N}\right)  })$ and
$\varphi$ a harmonic spline. Let $f_{k,\ell}\left(  r\right)  $ be the
$\left(  k,\ell\right)  $-th Fourier-Laplace coefficient of $f,$ so
\[
f_{k,\ell}\left(  r\right)  =\int_{\mathbb{S}^{d-1}}f\left(  r_{j}%
\theta\right)  Y_{k,\ell}\left(  \theta\right)  d\theta.
\]
Then $f_{k,\ell}$ is twice differentiable and $f_{k,\ell}\left(  r_{j}\right)
=0$ for $j=1,...,N.$ Define $g:=\Delta f,$ and let $g_{k,\ell}$ and
$\varphi_{k,\ell}$ be the Fourier-Laplace coefficients of $g$ and $\varphi$
respectively. By formula (\ref{eqscalarproduct}) applied to $\Delta f$ and
$\varphi$ it suffices to show that
\[
I_{k,,\ell}:=\int_{r_{1}}^{r_{N}}g_{k,\ell}\left(  r\right)  \overline
{\varphi_{k,\ell}\left(  r\right)  }r^{d-1}dr=0.
\]
Theorem \ref{ThmLk} applied to $f$ on the domain $A\left(  r_{j}%
,r_{j+1}\right)  $ shows that $g_{k,\ell}\left(  r\right)  =L_{k}f_{k,\ell
}\left(  r\right)  $ for $r\in\left(  r_{j},r_{j+1}\right)  .$ Thus we see
that
\[
I_{k,,\ell}=\int_{r_{j}}^{r_{j+1}}L_{k}f_{k,\ell}\left(  r\right)
\overline{\varphi_{k,\ell}\left(  r\right)  }r^{d-1}dr.
\]
Partial integration shows that with $h_{k,\ell}\left(  r\right)
=\overline{\varphi_{k,\ell}\left(  r\right)  }r^{d-1}$ shows that
\[
\int_{r_{j}}^{r_{j+1}}\frac{\partial^{2}f_{k,\ell}\left(  r\right)  }%
{\partial^{2}r}h_{k,\ell}\left(  r\right)  dr=\frac{\partial f_{k,\ell}\left(
r\right)  }{\partial r}h_{k,\ell}\left(  r\right)  \mid_{r_{j}}^{r_{j+1}%
}-f_{k,\ell}\left(  r\right)  \frac{\partial}{\partial r}h_{k,\ell}\left(
r\right)  \mid_{r_{j}}^{r_{j+1}}+\int_{r_{j}}^{r_{j+1}}f_{k,\ell}\left(
r\right)  \frac{\partial^{2}h_{k,\ell}}{\partial^{2}r}dr.
\]
Since $f_{k,\ell}\left(  r_{j}\right)  =0$ and $h_{k,\ell}\left(  r\right)  $
is analytic function for $r>0,$ the middle term vanishes. Next we see that
\begin{align*}
&  \int_{r_{j}}^{r_{j+1}}\frac{d-1}{r}\frac{\partial}{\partial r}f_{k,\ell
}\left(  r\right)  \cdot h_{k,\ell}\left(  r\right)  dr\\
&  =f_{k,\ell}\left(  r\right)  \frac{d-1}{r}h_{k,\ell}\left(  r\right)
\mid_{r_{j}}^{r_{j+1}}-\left(  d-1\right)  \int_{r_{j}}^{r_{j+1}}f_{k,\ell
}\left(  r\right)  \frac{\partial}{\partial r}\frac{h_{k,\ell}\left(
r\right)  }{r}dr.
\end{align*}
It follows that
\[
\int_{r_{j}}^{r_{j+1}}L_{k}f_{k,\ell}\left(  r\right)  h_{k,\ell}\left(
r\right)  dr=\frac{\partial f_{k,\ell}\left(  r\right)  }{\partial r}%
h_{k,\ell}\left(  r\right)  \mid_{r_{j}}^{r_{j+1}}+\int_{r_{j}}^{r_{j+1}%
}f_{k,\ell}\left(  r\right)  M\left(  h_{k,\ell}\right)  \left(  r\right)  dr
\]
and
\begin{align*}
&  M\left(  h_{k,\ell}\right)  \left(  r\right) \\
&  =\frac{\partial^{2}h_{k,\ell}}{\partial^{2}r}-\frac{\partial}{\partial
r}\frac{\left(  d-1\right)  h_{k,\ell}\left(  r\right)  }{r}-\frac{k\left(
k+d-2\right)  }{r^{2}}h_{k,\ell}\left(  r\right) \\
&  =r^{d-1}\frac{\partial^{2}}{\partial^{2}r}\overline{\varphi_{k,\ell}\left(
r\right)  }+2\left(  d-1\right)  r^{d-2}\frac{\partial}{\partial r}%
\overline{\varphi_{k,\ell}\left(  r\right)  }+\overline{\varphi_{k,\ell
}\left(  r\right)  }\left(  d-1\right)  \left(  d-2\right)  r^{d-3}\\
&  -\left(  d-1\right)  \left(  d-2\right)  r^{d-3}\overline{\varphi_{k,\ell
}\left(  r\right)  }-\left(  d-1\right)  r^{d-2}\frac{\partial}{\partial
r}\overline{\varphi_{k,\ell}\left(  r\right)  }-\frac{k\left(  k+d-2\right)
}{r^{2}}\overline{\varphi_{k,\ell}\left(  r\right)  }r^{d-1}%
\end{align*}
and we see that
\[
M\left(  h_{k,\ell}\right)  \left(  r\right)  =r^{d-1}\left(  \frac
{\partial^{2}}{\partial^{2}r}\overline{\varphi_{k,\ell}\left(  r\right)
}+\frac{\left(  d-1\right)  }{r}\frac{\partial}{\partial r}\overline
{\varphi_{k,\ell}\left(  r\right)  }-\frac{k\left(  k+d-2\right)  }{r^{2}%
}\overline{\varphi_{k,\ell}\left(  r\right)  }\right)  .
\]
Theorem \ref{ThmLk} applied to the harmonic function $\varphi$ on the domain
$A\left(  r_{j},r_{j+1}\right)  $ shows that $L_{k}\varphi_{k,\ell}\left(
r\right)  =0$ for $r\in\left(  r_{j},r_{j+1}\right)  $ and therefore $M\left(
h_{k,\ell}\right)  \left(  r\right)  =0$ for $r\in\left(  r_{j},r_{j+1}%
\right)  .$ It follows that
\begin{align*}%
{\displaystyle\sum_{j=1}^{N}}
\int_{r_{j}}^{r_{j+1}}L_{k}f_{k,\ell}\left(  r\right)  h_{k,\ell}\left(
r\right)  dr  &  =%
{\displaystyle\sum_{j=1}^{N}}
\frac{\partial f_{k,\ell}\left(  r\right)  }{\partial r}h_{k,\ell}\left(
r\right)  \mid_{r_{j}}^{r_{j+1}}\\
&  =\frac{\partial f_{k,\ell}\left(  r_{N}\right)  }{\partial r}h_{k,\ell
}\left(  r_{N}\right)  -\frac{\partial f_{k,\ell}\left(  r_{1}\right)
}{\partial r}h_{k,\ell}\left(  r_{1}\right)  .
\end{align*}
Further for $r^{\ast}=r_{N}$ or $r^{\ast}=r_{1}$ we have
\begin{align*}
\frac{\partial f_{k,\ell}\left(  r^{\ast}\right)  }{\partial r}h_{k,\ell
}\left(  r^{\ast}\right)   &  =\lim_{r\rightarrow r^{\ast}}\frac{\partial
}{\partial r}\int_{\mathbb{S}^{d-1}}f\left(  r\theta\right)  Y_{k,\ell}\left(
\theta\right)  d\theta\cdot h_{k,\ell}\left(  r\right) \\
&  =\lim_{r\rightarrow r^{\ast}}\int_{\mathbb{S}^{d-1}}\frac{\partial
}{\partial r}f\left(  r\theta\right)  Y_{k,\ell}\left(  \theta\right)
d\theta\cdot h_{k,\ell}\left(  r\right) \\
&  =\int_{\mathbb{S}^{d-1}}\frac{\partial}{\partial r}f\left(  r^{\ast}%
\theta\right)  Y_{k,\ell}\left(  \theta\right)  d\theta\cdot h_{k,\ell}\left(
r^{\ast}\right)  =0
\end{align*}
since $f$ has normal derivative $0$ at $r^{\ast}\theta.$
\end{proof}

\section{The constant $c_{d}\left(  \Omega\right)  $ for the annulus}

In this section we want to determine the constant $c_{d}\left(  A\left(
r,R\right)  \right)  $ for the annulus. The Green function for the annulus is
known, for a nice exposition see \cite{GrVu15}. On the other hand, Theorem
\ref{Remark1} describes a simpler way to solve the problem for $d\geq2$. We
define for $d\geq3$ the harmonic function $h_{d}\left(  x\right)  =\left(
\frac{\left\vert x\right\vert }{R}\right)  ^{2-d}-1$ and $h_{2}\left(
x\right)  =\log\left(  \frac{\left\vert x\right\vert }{R}\right)  $ for $d=2.$
Then
\[
T_{0}\left(  x\right)  =\frac{R^{2}}{2d}\left(  1-\frac{\left\vert
x\right\vert ^{2}}{R^{2}}-\frac{1-\frac{r^{2}}{R^{2}}}{h_{d}\left(  r\right)
}h_{d}\left(  x\right)  \right)  .
\]
has the property that it vanishes for $\left\vert x\right\vert =R$ and
$\left\vert x\right\vert =r$ and satisfies $\Delta f=-1.$

\begin{theorem}
\label{ThmAnnulus}Let $A\left(  r,R\right)  $ be the annulus in $\mathbb{R}%
^{d}$ for $d\geq2$ and set $D=\left(  d-2\right)  /2.$ Then
\begin{equation}
c_{d}\left(  A\left(  r,R\right)  \right)  =\frac{\left(  R-r\right)  ^{2}%
}{2d}H_{d}\left(  \frac{r}{R}\right)  \label{eqCformula}%
\end{equation}
where $H_{d}$ is defined for $d\geq3$ by
\begin{equation}
H_{d}\left(  \rho\right)  =\frac{1}{\left(  1-\rho\right)  ^{2}}\left(
1+\rho^{2D}\frac{1-\rho^{2}}{1-\rho^{2D}}-\frac{D+1}{D}\left(  D\rho^{2D}%
\frac{1-\rho^{2}}{1-\rho^{2D}}\right)  ^{\frac{1}{D+1}}\right)  \label{eqHd}%
\end{equation}
and for $d=2$
\begin{equation}
H_{2}\left(  \rho\right)  =\frac{1}{\left(  1-\rho\right)  ^{2}}\left(
1+\frac{1-\rho^{2}}{2\log\rho}-\frac{1}{2}\frac{1-\rho^{2}}{\log\rho}%
\log\left(  \frac{1-\rho^{2}}{-2\log\rho}\right)  \right)  .\label{eqHd2}%
\end{equation}

\end{theorem}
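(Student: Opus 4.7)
\medskip

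\noindent\textbf{Proof plan.}
The plan is to use Theorem \ref{Remark1}, which gives $c_{d}(A(r,R))=\sup_{x\in A(r,R)}T_{0}(x)$, together with the explicit formula for the torsion function $T_{0}$ already recorded immediately before Theorem \ref{ThmAnnulus}. The function $T_{0}$ is radial, so I first reduce the problem to a one-variable optimization: introduce the normalized radial variable $t=|x|/R$ and the ratio $\rho=r/R$, so that $t$ ranges over $[\rho,1]$ and the quantity to be maximized is
\[
g(t):=\frac{2d}{R^{2}}T_{0}(Rt)=1-t^{2}-\frac{1-\rho^{2}}{h_{d}(r)}\,\tilde h_{d}(t),
\]
where $\tilde h_{d}(t)=t^{-2D}-1$ for $d\geq 3$ (with $D=(d-2)/2$) and $\tilde h_{2}(t)=\log t$. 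By construction $g(\rho)=g(1)=0$ and $g>0$ on $(\rho,1)$ (since $\Delta T_{0}=-1<0$ and $T_{0}$ vanishes on $\partial A(r,R)$, the minimum principle forces $T_{0}>0$ in the interior), so the supremum is attained at an interior critical point.

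Next I locate the critical point by solving $g'(t)=0$. For $d\geq 3$ this reads
\[
-2t+\frac{2D(1-\rho^{2})\rho^{2D}}{1-\rho^{2D}}\,t^{-2D-1}=0,
\]
which after rearranging gives $(t^{*})^{2D+2}=D\rho^{2D}\,\dfrac{1-\rho^{2}}{1-\rho^{2D}}$, hence
\[
(t^{*})^{2}=\left(D\rho^{2D}\,\frac{1-\rho^{2}}{1-\rho^{2D}}\right)^{1/(D+1)}.
\]
For $d=2$ the equation $-2t-(1-\rho^{2})/(t\log\rho)=0$ yields $(t^{*})^{2}=(1-\rho^{2})/(-2\log\rho)$.

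The last step is the algebraic simplification of $g(t^{*})$. Writing $A:=D\rho^{2D}(1-\rho^{2})/(1-\rho^{2D})$, the critical-point relation $(t^{*})^{2D+2}=A$ gives $(t^{*})^{-2D}=(t^{*})^{2}/A$, so the ``non-harmonic'' term simplifies as
\[
\frac{(1-\rho^{2})\rho^{2D}}{1-\rho^{2D}}\bigl((t^{*})^{-2D}-1\bigr)
=\tfrac{1}{D}(t^{*})^{2}-\frac{(1-\rho^{2})\rho^{2D}}{1-\rho^{2D}},
\]
and substituting back produces
\[
g(t^{*})=1+\rho^{2D}\,\frac{1-\rho^{2}}{1-\rho^{2D}}-\frac{D+1}{D}(t^{*})^{2}.
\]
Inserting the expression for $(t^{*})^{2}$ and multiplying by $R^{2}/(2d)=(R-r)^{2}/\bigl(2d(1-\rho)^{2}\bigr)$ yields (\ref{eqCformula}) with $H_{d}$ as in (\ref{eqHd}). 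The case $d=2$ is handled in exactly the same way, using $\tilde h_{2}(t)=\log t$; plugging $(t^{*})^{2}=(1-\rho^{2})/(-2\log\rho)$ into $g$ and collecting the logarithmic terms produces (\ref{eqHd2}).

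The main obstacle is not conceptual but purely bookkeeping: the awkward grouping of the factors $\rho^{2D}$, $1-\rho^{2}$, $1-\rho^{2D}$ and the exponent $1/(D+1)$ in (\ref{eqHd}) only emerges after the substitution of $(t^{*})^{2}$ is carried out carefully. A minor additional point is to verify that $t^{*}\in(\rho,1)$, which follows from $H_{d}(\rho)>0$ together with the sign of $g$ on the interior established above; equivalently, since $g$ is smooth on $(0,\infty)$ and $g$ vanishes at the two endpoints while being positive in between, the unique critical point $t^{*}$ automatically lies in $(\rho,1)$ and realizes the supremum.
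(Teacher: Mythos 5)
Your proposal is correct and follows essentially the same route as the paper: both reduce to maximizing the explicit radial torsion function via Theorem \ref{Remark1}, locate the unique interior critical point (your variable $t=|x|/R$ versus the paper's $u=|x|^{2}/R^{2}$ is an immaterial reparametrization, with $(t^{*})^{2}=u_{0}=(DB_{d}(\rho))^{1/(D+1)}$), and perform the same algebraic simplification to reach (\ref{eqHd}) and (\ref{eqHd2}). Your added justification that the maximum is interior (positivity of $T_{0}$ by the minimum principle) is a slightly more careful version of the paper's remark that $f_{d}$ vanishes at both endpoints.
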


\begin{proof}
At first we assume that $d>2.$ Then $h_{d}\left(  x\right)  =\left(
\frac{\left\vert x\right\vert }{R}\right)  ^{2-d}-1.$ Put $u=\left\vert
x\right\vert ^{2}/R^{2}$ and $\rho=r/R<1$. Then
\[
\ T_{0}\left(  x\right)  =\frac{R^{2}}{2d}\left[  1-u-\frac{1-\rho^{2}}%
{\rho^{-2D}-1}\left(  \left(  u\right)  ^{-D}-1\right)  \right]  .
\]
It follows that
\begin{equation}
c_{d}\left(  A\left(  r,R\right)  \right)  =\sup_{x\in A\left(  r,R\right)
}\ T_{0}\left(  x\right)  =\frac{R^{2}}{2d}\sup_{\frac{r^{2}}{R^{2}}\leq
u\leq1}f_{d}\left(  u\right)  \label{eqcArR}%
\end{equation}
where we define
\[
f_{d}\left(  u\right)  =1-u-B_{d}\left(  \rho\right)  \left(  u^{-D}-1\right)
=1-u-B_{d}\left(  \rho\right)  u^{-D}+B_{d}\left(  \rho\right)
\]
and
\begin{equation}
B_{d}\left(  \rho\right)  =\frac{1-\rho^{2}}{\rho^{-2D}-1}=\rho^{2D}%
\frac{1-\rho^{2}}{1-\rho^{2D}}.\label{eqBrho}%
\end{equation}
Obviously $f_{d}\left(  1\right)  =0$ and $f_{d}\left(  \frac{r^{2}}{R^{2}%
}\right)  =f_{d}\left(  \rho^{2}\right)  =0.$ Hence the maximum of
$f_{d}\left(  u\right)  $ on the interval $\left[  \frac{r^{2}}{R^{2}%
},1\right]  $ is attained in the interior. Further
\[
\frac{d}{du}f_{d}\left(  u\right)  =-1+DB_{d}\left(  \rho\right)  u^{-D-1}.
\]
Hence $f_{d}^{\prime}\left(  u_{0}\right)  =0$ implies that $u_{0}%
^{D+1}=DB_{d}\left(  \rho\right)  .$ It follows that with $B=B_{d}\left(
\rho\right)  $ and inserting $u_{0}=\left(  DB\right)  ^{\frac{1}{D+1}}$
\begin{equation}
\sup_{\rho=\frac{r^{2}}{R^{2}}\leq u\leq1}f_{d}\left(  u\right)  =1-\left(
DB\right)  ^{\frac{1}{D+1}}-B\left(  \left(  DB\right)  ^{-\frac{D}{D+1}%
}-1\right)  =1+B-\frac{D+1}{D}\left(  DB\right)  ^{\frac{1}{D+1}}.\label{eqfd}%
\end{equation}
The identity (\ref{eqHd}) follows from (\ref{eqcArR}) and (\ref{eqfd}) by
writing
\[
\frac{R^{2}}{2d}=\frac{\left(  R-r\right)  ^{2}}{2d}\frac{1}{\left(
1-\rho\right)  ^{2}}.
\]
For $d=2$ we have $h_{2}\left(  x\right)  =\log\left(  \frac{\left\vert
x\right\vert }{R}\right)  =\frac{1}{2}\log\left(  \frac{\left\vert
x\right\vert }{R}\right)  ^{2}$ and with $u=\left\vert x\right\vert ^{2}%
/R^{2}$
\[
\ T_{0}\left(  x\right)  =\frac{R^{2}}{2d}\left[  1-u-\frac{1}{2}\frac
{1-\rho^{2}}{\log\rho}\log u\right]  .
\]
It follows that
\[
c_{d}\left(  A\left(  r,R\right)  \right)  =\sup_{x\in A\left(  r,R\right)
}\ T_{0}\left(  x\right)  =\frac{R^{2}}{2d}\sup_{\frac{r^{2}}{R^{2}}\leq
u\leq1}f_{2}\left(  u\right)
\]
where we define $f_{2}\left(  u\right)  =1-u-\frac{1}{2}\frac{1-\rho^{2}}%
{\log\rho}\log u$. Then $f_{2}\left(  \rho^{2}\right)  =0$ and $f_{1}\left(
1\right)  =0$ and $f_{2}^{\prime}\left(  u\right)  =-1-\frac{1}{2}\frac
{1-\rho^{2}}{\log\rho}\frac{1}{u}.$ Then $u=\frac{1-\rho^{2}}{-2\log\rho}$ is
the critical point and
\[
\sup_{\rho=\frac{r^{2}}{R^{2}}\leq u\leq1}f_{2}\left(  u\right)
=1+\frac{1-\rho^{2}}{2\log\rho}-\frac{1}{2}\frac{1-\rho^{2}}{\log\rho}%
\log\left(  \frac{1-\rho^{2}}{-2\log\rho}\right)  .
\]

\end{proof}

\begin{proposition}
For dimension $d=3$ the function $H_{3}\left(  \rho\right)  $ in Theorem
\ref{ThmAnnulus} is strictly decreasing on $\left[  0,1\right]  $ and
\[
\frac{\left(  R-r\right)  ^{2}}{8}\leq c_{d}\left(  A\left(  r,R\right)
\right)  \leq\frac{\left(  R-r\right)  ^{2}}{6}.
\]

\end{proposition}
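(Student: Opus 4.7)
The plan is to exploit the substitution $s := (\rho(1+\rho)/2)^{1/3}$, which is a strictly increasing bijection of $[0,1]$ onto itself and which converts the numerator of $H_3$ into a factored cubic. Specializing (\ref{eqHd}) to $d=3$ (so $D=1/2$) and using $(1-\rho^2)/(1-\rho)=1+\rho$ gives
\[
H_3(\rho) \;=\; \frac{1+\rho+\rho^2-3s^2}{(1-\rho)^2} \;=\; \frac{(1-s)^2(1+2s)}{(1-\rho)^2},
\]
the second equality by the identity $1+2s^3-3s^2=(1-s)^2(1+2s)$ combined with $2s^3=\rho+\rho^2$. Writing further $1-s^3 = (1-\rho)(2+\rho)/2$, so that $(1-s)/(1-\rho) = (2+\rho)/[2(1+s+s^2)]$, recasts $H_3$ as the manifestly continuous expression $H_3(\rho) = (2+\rho)^2(1+2s)/[4(1+s+s^2)^2]$, from which the endpoint values $H_3(0)=1$ and $H_3(1)=3/4$ are read off by direct substitution.

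To establish strict monotonicity, I would differentiate. From $3s^2 s'(\rho) = (1+2\rho)/2$ one obtains $N'(\rho) = -(1+2\rho)(1-s)/s$ for $N(\rho) = (1-s)^2(1+2s)$, and then, since $H_3 = N/(1-\rho)^2$, the sign of $H_3'$ equals the sign of $(1-\rho)N' + 2N$. A short algebraic reduction, using $4s^3 = 2\rho(1+\rho)$ to cancel cubic terms, yields
\[
(1-\rho)N'(\rho) + 2N(\rho) \;=\; \frac{1-s}{s}\bigl(2s(1+s) - 1 - 3\rho\bigr).
\]
Since $0<s<1$ on $(0,1)$, the monotonicity claim $H_3'(\rho) < 0$ is therefore equivalent to the scalar inequality $2s(1+s) < 1+3\rho$ for $\rho\in(0,1)$.

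To prove this last inequality I would solve $2s^3 = \rho(1+\rho)$ for $\rho = (-1+\sqrt{1+8s^3})/2$, turning it into the form $3\sqrt{1+8s^3} > (1+2s)^2$ with both sides positive. Squaring reduces the question to the polynomial statement
\[
9(1+8s^3) - (1+2s)^4 \;=\; -8\bigl(2s^4-5s^3+3s^2+s-1\bigr) \;=\; -8(s-1)^3(2s+1),
\]
which is strictly positive on $(0,1)$. The main obstacle is spotting the factorization $(s-1)^3(2s+1)$; it is found by observing that $s=1$ is a root, performing polynomial division, and noticing that the tangency we already detected at $\rho=1$ forces $s=1$ to be a triple root.

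Combining monotonicity with the endpoint values $H_3(0)=1$ and $H_3(1)=3/4$ yields $3/4 \leq H_3(\rho) \leq 1$ on $[0,1]$. Substituting into (\ref{eqCformula}) with $2d=6$ then produces the claimed bounds $(R-r)^2/8 \leq c_d(A(r,R)) \leq (R-r)^2/6$, completing the plan.
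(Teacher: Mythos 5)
Your argument is correct; I checked each algebraic step. The identity $1+2s^{3}-3s^{2}=(1-s)^{2}(1+2s)$ with $2s^{3}=\rho+\rho^{2}$, the derivative formula $N'(\rho)=-(1+2\rho)(1-s)/s$, the reduction of the sign of $(1-\rho)N'+2N$ to the inequality $2s(1+s)<1+3\rho$ via $4s^{3}=2\rho(1+\rho)$, and the final factorization $9(1+8s^{3})-(1+2s)^{4}=8(1-s)^{3}(2s+1)>0$ all verify, and the endpoint values $H_{3}(0)=1$, $H_{3}(1)=3/4$ give exactly the stated bounds through (\ref{eqCformula}). Your route shares the paper's overall strategy (differentiate $H_{3}$ and reduce the sign question to a radical-free polynomial inequality exhibiting a high-order zero at the right endpoint), but the algebra is organized differently: the paper works directly in $\rho$, writes $H_{3}'$ as an explicit prefactor times $w(\rho)=\rho^{2}+4\rho+1-3(1+\rho)\sqrt[3]{\tfrac12\rho(\rho+1)}$, and removes the cube root by cubing, arriving at $\tfrac12(\rho-1)^{4}(2\rho^{2}+5\rho+2)\geq0$; you instead rationalize from the outset with the substitution $s=(\rho(1+\rho)/2)^{1/3}$ and later eliminate a square root by squaring, landing on a lower-degree polynomial in $s$. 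Your substitution buys a genuine dividend the paper does not have: the closed form $H_{3}(\rho)=(2+\rho)^{2}(1+2s)/\bigl(4(1+s+s^{2})^{2}\bigr)$ makes $H_{3}$ manifestly continuous at $\rho=1$ and yields $H_{3}(1)=3/4$ by direct evaluation, whereas the paper must compute $\lim_{\rho\to1}H_{3}(\rho)$ as an indeterminate form. The one point worth stating explicitly in a final write-up is the positivity of both sides before squaring $(1+2s)^{2}<3\sqrt{1+8s^{3}}$, which you do note; with that, the proof is complete.
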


\begin{proof}
For $d=3$ we have $D=\frac{1}{2}$ and
\begin{align*}
H_{3}\left(  \rho\right)   &  =\frac{1+\rho\frac{1-\rho^{2}}{1-\rho}-3\left(
\frac{1}{2}\rho\frac{1-\rho^{2}}{1-\rho}\right)  ^{\frac{2}{3}}}{\left(
1-\rho\right)  ^{2}}\\
&  =\frac{1+\rho\left(  1+\rho\right)  -3\left(  \frac{1}{2}\rho\left(
1+\rho\right)  \right)  ^{\frac{2}{3}}}{\left(  1-\rho\right)  ^{2}}.
\end{align*}
A computation shows that
\[
H_{3}^{\prime}\left(  \rho\right)  =\frac{\left(  -1\right)  \sqrt[3]%
{2}\left(  \rho\left(  \rho+1\right)  \right)  ^{\frac{2}{3}}}{\rho\left(
1-\rho\right)  ^{3}\left(  \rho+1\right)  }w\left(  \rho\right)
\]
where $w\left(  \rho\right)  =\rho^{2}+4\rho+1-3\left(  1+\rho\right)
\sqrt[3]{\frac{1}{2}\rho\left(  \rho+1\right)  }.$ Then $w\left(  \rho\right)
$ is non-negative since
\[
\left(  \rho^{2}+4\rho+1\right)  ^{3}-3^{3}\left(  1+\rho\right)  ^{3}\frac
{1}{2}\rho\left(  \rho+1\right)  =\frac{1}{2}\left(  \rho-1\right)
^{4}\left(  2\rho^{2}+5\rho+2\right)  \geq0.
\]
It follows that $H_{3}^{\prime}\left(  \rho\right)  <0$ for $\rho\in\left(
0,1\right)  $ and $H_{3}$ is strictly decreasing. Hence%
\begin{align*}
\frac{3}{4}  &  =\lim_{\rho\rightarrow1}\frac{1+\rho\left(  1+\rho\right)
-3\left(  \frac{1}{2}\rho\left(  1+\rho\right)  \right)  ^{\frac{2}{3}}%
}{\left(  1-\rho\right)  ^{2}}\leq H_{3}\left(  \rho\right) \\
&  \leq H_{3}\left(  0\right)  =1.
\end{align*}
The proof is complete.
\end{proof}

\begin{proposition}
For dimension $d=2$ the function $H_{2}\left(  \rho\right)  $ in Theorem
\ref{ThmAnnulus} is strictly decreasing on $\left[  0,1\right]  $ and
\[
\frac{\left(  R-r\right)  ^{2}}{8}\leq c_{d}\left(  A\left(  r,R\right)
\right)  \leq\frac{\left(  R-r\right)  ^{2}}{4}.
\]

\end{proposition}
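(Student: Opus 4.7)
The strategy is to follow the preceding $d=3$ case: show that $H_2$ is monotone decreasing on $(0,1)$ and compute the boundary values $H_2(0^+)$ and $H_2(1^-)$. By the identity (\ref{eqCformula}), since $d=2$ gives the outer constant $\tfrac{1}{2d}=\tfrac{1}{4}$, we have $c_2(A(r,R)) = \tfrac{(R-r)^2}{4}\,H_2(r/R)$, so the claimed bounds $\tfrac{1}{8}(R-r)^2 \le c_2(A(r,R)) \le \tfrac{1}{4}(R-r)^2$ follow immediately from the expected endpoint values $H_2(0^+) = 1$ and $H_2(1^-) = \tfrac{1}{2}$ together with monotonicity.

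To compute the endpoints, note first that as $\rho\to 0^+$ the factor $\log\rho\to -\infty$ forces the second and third terms in the bracket of (\ref{eqHd2}) to zero (the third term uses $y\log y \to 0$ applied to $y = \frac{1-\rho^2}{-2\log\rho}$), while $(1-\rho)^2 \to 1$; hence $H_2(0^+) = 1$. For the other endpoint, I would change variable to $t = -\log\rho$, expand each term as a Taylor series around $t=0$, and check that both the numerator of $H_2(\rho)$ and the denominator $(1-\rho)^2$ vanish to second order in $t$ with ratio $\tfrac{1}{2}$, which yields $H_2(1^-) = \tfrac{1}{2}$.

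For monotonicity, it is convenient to introduce $A(\rho) := \frac{1-\rho^2}{-2\log\rho}$, the critical point $u_0$ appearing in the proof of Theorem \ref{ThmAnnulus}. One then has the clean identity
\[
(1-\rho)^2 H_2(\rho) \;=\; 1 - A(\rho) + A(\rho)\log A(\rho) \;=:\; g(A(\rho)),
\]
where $g(x) = 1-x+x\log x$ satisfies $g'(x) = \log x$ and $g''(x) = 1/x > 0$. A preliminary step is to verify that $A:(0,1)\to(0,1)$ is a strictly increasing bijection; positivity of $A'(\rho)$ reduces to the inequality $2\rho^2\log\rho - \rho^2 + 1 > 0$ on $(0,1)$, which follows because this function vanishes at $\rho=1$ and has derivative $4\rho\log\rho < 0$ on $(0,1)$.

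The main obstacle is establishing the sign of $H_2'(\rho)$. Differentiating the identity above gives $(1-\rho)^2 H_2'(\rho) = (\log A(\rho))\,A'(\rho) + 2(1-\rho)H_2(\rho)$; since $A'(\rho)>0$ and $\log A(\rho)<0$ the first summand is negative and the second is positive, so the task is to show that the first dominates. Unlike the $d=3$ case, the resulting inequality is genuinely transcendental and cannot be cleared by raising to a power. My plan is to change variable to $t = -\log\rho \in (0,\infty)$, under which $A = (1-e^{-2t})/(2t)$ is real-analytic at $t=0$, rewrite the required inequality as a single explicit function of $t$, and verify its sign by combining the known limits at the two endpoints with a sign analysis of its derivative. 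Should the direct attack prove unwieldy, an alternative is to exploit the convexity of $g$ to bracket $g(A(\rho))$ between a tangent and a chord and reduce the monotonicity assertion to more tractable inequalities in $\rho$ alone.
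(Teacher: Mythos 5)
Your setup and endpoint computations are correct and essentially reproduce (in more detail) what the paper does: with $A(\rho)=\frac{1-\rho^{2}}{-2\log\rho}$ one indeed has $(1-\rho)^{2}H_{2}(\rho)=g(A(\rho))$ for $g(x)=1-x+x\log x$; your argument that $A$ is strictly increasing from $(0,1)$ onto $(0,1)$ is sound (the derivative of $2\rho^{2}\log\rho-\rho^{2}+1$ is $4\rho\log\rho<0$ and the function vanishes at $\rho=1$); and the limits $H_{2}(0^{+})=1$ and $H_{2}(1^{-})=\frac{1}{2}$ come out as you describe. The paper itself merely asserts these limits and disposes of the monotonicity with the phrase ``similarly one can see,'' leaning on the detailed $d=3$ computation, so up to this point you are on the same track and your $g(A)$ reformulation is a clean way to organize it.

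The genuine gap is the strict monotonicity of $H_{2}$, which is the only nontrivial content of the proposition and which your proposal reduces to the inequality $2g(A(\rho))<(1-\rho)\,A'(\rho)\log\bigl(1/A(\rho)\bigr)$ without proving it; everything else (including both displayed bounds on $c_{2}$) is derived from this unproven step. This is not a routine verification: at $\rho=\frac{1}{2}$ the two sides are approximately $0.2533$ and $0.2579$, a margin under $2\%$, so any argument must be sharp. Moreover, your proposed fallback via convexity of $g$ meets a concrete obstruction: the Taylor bracketing $\frac{(1-x)^{2}}{2}\le g(x)\le\frac{(1-x)^{2}}{2x}$ would yield the lower bound $H_{2}\ge\frac{1}{2}$ only if $1-A(\rho)\ge 1-\rho$, i.e. $A(\rho)\le\rho$; but $A(\rho)\le\rho$ is equivalent (via $t=-\log\rho$) to $\sinh t\le t$, which is false for $t>0$, so in fact $A(\rho)>\rho$ and the chord/tangent estimates point the wrong way. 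Until the sign of $(1-\rho)(\log A)A'+2g(A)$ on $(0,1)$ is actually established --- for instance by carrying your $t=-\log\rho$ substitution through to an explicit one-variable inequality and verifying it --- neither the claimed strict decrease of $H_{2}$ nor the bounds $\frac{(R-r)^{2}}{8}\le c_{2}(A(r,R))\le\frac{(R-r)^{2}}{4}$ is proven.
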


\begin{proof}
Similarly one can see that the function $H_{2}\left(  \rho\right)  $ is
decreasing and
\[
\lim_{\rho\rightarrow0}H_{2}\left(  \rho\right)  =\lim_{\rho\rightarrow0}%
\frac{1+\frac{1-\rho^{2}}{2\log\rho}-\frac{1}{2}\frac{1-\rho^{2}}{\log\rho
}\log\left(  \frac{1-\rho^{2}}{-2\log\rho}\right)  }{\left(  1-\rho\right)
^{2}}=1
\]
and $\lim_{\rho\rightarrow1}H_{2}\left(  \rho\right)  =\frac{1}{2}.$ The proof
is complete.
\end{proof}

The discussion for $d\geq4$ is more technical and we need the following result:

\begin{lemma}
\label{LemMain}Let $d>2$, i.e. that $D=\left(  d-2\right)  /2>0.$ Then the
function
\[
B_{d}\left(  \rho\right)  =\rho^{2D}\frac{1-\rho^{2}}{1-\rho^{2D}}\text{ for
}\rho\in\left(  0,1\right)
\]
is increasing and positive on $\left(  0,1\right)  $ and the following limits
exist:
\[
\lim_{\rho\rightarrow1}B_{d}\left(  \rho\right)  =\frac{1}{D}\text{ and }%
\lim_{\rho\rightarrow1}B_{D}^{\prime}\left(  \rho\right)  =\frac{D+1}{D}\text{
and }\lim_{\rho\rightarrow1}B_{d}^{\prime\prime\prime}\left(  \rho\right)  =0.
\]
Further $B_{d}^{\prime\prime\prime}$ is positive on $\left(  0,1\right)  $ for
$D>1,$ and negative for $D\in\left(  \frac{1}{2},1\right)  .$ Further
\begin{equation}
\frac{B_{d}^{\prime}\left(  \rho\right)  }{B_{d}\left(  \rho\right)  }%
=\frac{2\left(  D-\rho^{2}+\rho^{2D+2}-\rho^{2}D\right)  }{\rho\left(
1-\rho^{2D}\right)  \left(  1-\rho^{2}\right)  }.\label{eqBdnew}%
\end{equation}

\end{lemma}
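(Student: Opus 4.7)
The plan is to handle the algebraic identity, positivity, monotonicity and the asymptotic data at $\rho = 1$ first, leaving the global sign of $B_d'''$ as the main obstacle.

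Positivity of $B_d$ on $(0,1)$ is immediate from the three positive factors in $B_d(\rho) = \rho^{2D}(1-\rho^{2})/(1-\rho^{2D})$. Taking the logarithmic derivative I get
\[
\frac{B_d'(\rho)}{B_d(\rho)} = \frac{2D}{\rho} - \frac{2\rho}{1-\rho^{2}} + \frac{2D\rho^{2D-1}}{1-\rho^{2D}},
\]
and clearing denominators and simplifying yields the identity (\ref{eqBdnew}). To prove $B_d$ is increasing it then suffices to show that the numerator $g(\rho) := D - \rho^{2}(D+1) + \rho^{2D+2}$ of (\ref{eqBdnew}) is positive on $(0,1)$; since $g(1) = 0$ and
\[
g'(\rho) = -2(D+1)\rho + (2D+2)\rho^{2D+1} = -2(D+1)\rho(1 - \rho^{2D}) < 0
\]
on $(0,1)$, this is immediate.

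For the three limits at $\rho = 1$, I would use the substitution $\rho = e^{-\sigma}$ which produces the compact form
\[
B_d(e^{-\sigma}) + 1 = e^{-\sigma}\,\frac{\sinh((D+1)\sigma)}{\sinh(D\sigma)}.
\]
Expanding $\sinh(x)/x = 1 + x^{2}/6 + O(x^{4})$ and $e^{-\sigma} = 1 - \sigma + \sigma^{2}/2 - \sigma^{3}/6 + O(\sigma^{4})$, and then converting back via $\sigma = s + s^{2}/2 + s^{3}/3 + O(s^{4})$ with $s = 1 - \rho$, I would obtain the Taylor expansion
\[
B_d(1 - s) = \frac{1}{D} - \frac{D+1}{D}\, s + \frac{(D+1)(2D+1)}{6D}\, s^{2} + 0\cdot s^{3} + O(s^{4}).
\]
Reading off the coefficients delivers $B_d(1) = 1/D$ and $B_d'(1) = (D+1)/D$, and — crucially — $B_d'''(1) = 0$ since the coefficient of $s^{3}$ vanishes identically in $D$.

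The real work is the global sign of $B_d'''$ on $(0,1)$ with the correct dependence on $D$. A useful observation is that the identity $B_d + 1 = (1 - \rho^{2D+2})/(1 - \rho^{2D})$ specializes to $B_d(\rho) = \rho + \rho^{2}$ at $D = 1/2$ and to $B_d(\rho) = \rho^{2}$ at $D = 1$, so $B_d''' \equiv 0$ on $(0,1)$ for both of these boundary values of $D$. My approach would be to view $B_d'''(\rho)$ as a smooth function of the parameter $D > 0$ for each fixed $\rho \in (0,1)$ and to compute $\partial_D B_d'''(\rho)$ at $D = 1$: if this partial derivative is strictly positive for every $\rho \in (0,1)$, then the vanishing of $B_d'''$ at $D = 1$ forces $B_d''' > 0$ just above $D = 1$ and $B_d''' < 0$ just below. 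Extending from "just above/below" to the full ranges $D > 1$ and $D \in (1/2, 1)$ would then require either an explicit computation of $B_d'''$ — which after multiplication by $\rho^{3}(1 - \rho^{2D})^{4}$ becomes an algebraic expression in $\rho$ and $\rho^{2D}$ amenable to direct sign analysis — or a convexity/monotonicity argument in $D$ combined with the vanishing at both $D = 1/2$ and $D = 1$. The main obstacle is precisely this step: carrying out the sign analysis of $B_d'''$ in a clean way, since the Taylor expansion above only gives local information near $\rho = 1$ (where $B_d''' = 0$ anyway) and the natural closed-form expression for $B_d'''$ is cumbersome.
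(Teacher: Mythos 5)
Your handling of the identity (\ref{eqBdnew}), of positivity and monotonicity, and of the three limits at $\rho=1$ is correct. The logarithmic-derivative computation and the reduction of monotonicity to the sign of $g(\rho)=D-(D+1)\rho^{2}+\rho^{2D+2}$ essentially coincide with the paper's argument (the paper works in the variable $x=\rho^{2}$). Your route to the limits via $B_d(e^{-\sigma})+1=e^{-\sigma}\sinh((D+1)\sigma)/\sinh(D\sigma)$ is genuinely different from, and cleaner than, the paper's repeated l'Hospital computations; since the right-hand side is analytic at $\sigma=0$, $B_d$ extends analytically across $\rho=1$ and the limits of the derivatives really are given by the Taylor coefficients --- but you should say this explicitly, since a Taylor expansion of the function alone does not by itself control limits of derivatives.

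The genuine gap is the global sign of $B_d'''$ on $(0,1)$, which is the substantive claim of the lemma (the case $D>1$ is what is used later to prove that $H_d$ is increasing). Your perturbation at $D=1$ would at best give the sign of $B_d'''$ for $D$ in a neighborhood of $1$, and only after verifying $\partial_D B_d'''>0$ pointwise, which you do not do; you acknowledge that the extension to all $D>1$ and all $D\in(\tfrac12,1)$ is missing, and the local analysis cannot be patched into a global statement without a new idea. The paper closes this step by an explicit computation combined with a zero-counting argument: it shows
\[
B_d'''(\rho)=\frac{4D\rho^{2D-3}}{\left(1-\rho^{2D}\right)^{4}}\,f\!\left(\rho^{2}\right),
\]
where $f$ is an explicit linear combination of the six powers $1,\,x,\,x^{D},\,x^{D+1},\,x^{2D},\,x^{2D+1}$. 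One verifies that $f$ has a zero of order exactly $5$ at $x=1$; since a nontrivial linear combination of six power functions with distinct exponents has at most five positive zeros counted with multiplicity, $f$ has no zero in $(0,1)$ and hence constant sign there, equal to $\mathrm{sign}\,f(0)=\mathrm{sign}\left((2D-1)(D-1)\right)$. This single mechanism yields the sign dichotomy for $D>1$ versus $D\in(\tfrac12,1)$, is consistent with your observation that $B_d'''\equiv0$ at $D=\tfrac12$ and $D=1$ (where $f(0)=0$), and also gives $\lim_{\rho\rightarrow1}B_d'''(\rho)=0$ (an order-$5$ zero against an order-$4$ pole). To complete your proof you need this counting argument, or some equally global substitute for it.
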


\begin{proof}
For $D>0$ we see use the rule of l'Hospital $\lim_{\rho\rightarrow1}%
B_{d}\left(  \rho\right)  =\lim_{\rho\rightarrow1}\rho^{2D}\frac{1-\rho^{2}%
}{1-\rho^{2D}}=\frac{1}{D}.$ For $\rho\in\left(  0,1\right)  $ we have
\[
B_{D}^{\prime}\left(  \rho\right)  =\frac{2\rho^{2D-1}}{\left(  1-\rho
^{2D}\right)  ^{2}}\left(  D-\rho^{2}+\rho^{2D+2}-\rho^{2}D\right)
\rightarrow\frac{D+1}{D}%
\]
for $\rho\rightarrow1.$ Further we obtain equation (\ref{eqBdnew}) from the
last formula. Consider the function $g\left(  x\right)  =D-x+x^{D+1}-xD$,
then
\[
g^{\prime}\left(  x\right)  =\left(  D+1\right)  x^{D}-\left(  D+1\right)
=\left(  D+1\right)  \left(  x^{D}-1\right)  <0
\]
for $0<x<1$ for $D>0.$ Hence $g$ is strictly decreasing on $\left[
0,1\right]  $ and $g\left(  1\right)  =0,$ so $g\left(  x\right)  \geq
g\left(  1\right)  =0$ for all $x\in\left(  0,1\right)  .$ Thus $g$ is
positive in $\left(  0,1\right)  $ and it follows that $B_{d}\left(
\rho\right)  $ is strictly increasing for any $D>0.\ $Further
\[
\frac{d^{3}}{d\rho^{3}}\left(  \rho^{2D}\frac{1-\rho^{2}}{1-\rho^{2D}}\right)
=\frac{4D\rho^{2D-3}}{\left(  1-\rho^{2D}\right)  ^{4}}f\left(  \rho
^{2}\right)
\]
where
\begin{align*}
f\left(  x\right)   &  =\left(  2D-1\right)  \left(  D-1\right)  -x\left(
D+1\right)  \left(  2D+1\right)  +2x^{D}\left(  4D^{2}-1\right)  \allowbreak\\
&  -2x^{D+1}\left(  4D^{2}-1\right)  +\allowbreak x^{2D}\left(  2D^{2}%
+3D+1\right)  -x^{2D+1}\left(  2D^{2}-3D+1\right)  .
\end{align*}
One can verify that $f\left(  x\right)  $ has a zero of order $5$ at $x=1.$
Since $f$ is in in the linear space generated by the basis functions
$1,x,x^{D},x^{D+1},x^{2D},x^{2D+1}$ the function $f$ has at exactly $5$ zeros
at $x=1$ and no more positive zeros. It follows that $f$ is either positive or
negative on $\left(  0,1\right)  ,$ hence $f$ has the same sign as $f\left(
0\right)  =\left(  2D-1\right)  \left(  D-1\right)  .$ Since $f$ has a zero of
order $5$ at $x=1$ we see that
\[
\lim_{\rho\rightarrow1}B_{d}^{\prime\prime\prime}\left(  \rho\right)  =0.
\]
The proof is complete.
\end{proof}

\begin{theorem}
For dimension $d=4$ the function $H_{4}\left(  \rho\right)  $ in Theorem
\ref{ThmAnnulus} is constant and
\[
c_{d}\left(  A\left(  r,R\right)  \right)  =\frac{\left(  R-r\right)  ^{2}}%
{8}.
\]
For dimension $d>4$ the function $H_{d}$ in Theorem \ref{ThmAnnulus} is
strictly increasing on $\left[  0,1\right]  $ and
\begin{equation}
\frac{\left(  R-r\right)  ^{2}}{2d}\leq c_{d}\left(  A\left(  r,R\right)
\right)  \leq\frac{\left(  R-r\right)  ^{2}}{8}.\label{qcestimate8}%
\end{equation}

\end{theorem}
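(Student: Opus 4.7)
The plan is to break the theorem into three pieces: the exact evaluation at $d=4$, the two boundary values of $H_d$ on $[0,1]$ for $d>4$, and the strict monotonicity on $(0,1)$ for $d>4$. The first two should be routine; the third is where the technical work lies and is the main obstacle.

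First, for $d=4$ one has $D=1$, so Lemma \ref{LemMain} (or direct inspection) gives $B_4(\rho)=\rho^2(1-\rho^2)/(1-\rho^2)=\rho^2$. Substituting into (\ref{eqHd}),
\[
H_4(\rho)=\frac{1+\rho^2-2\sqrt{\rho^2}}{(1-\rho)^2}=\frac{(1-\rho)^2}{(1-\rho)^2}=1,
\]
and (\ref{eqCformula}) then gives $c_d(A(r,R))=(R-r)^2/8$ exactly.

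For $d>4$, I would first pin down the endpoint values. At $\rho=0$, $B_d(0)=0$ makes every $\rho$-dependent bracket in (\ref{eqHd}) collapse, yielding $H_d(0)=1$. For $\rho\to1$, I would introduce the convenient shorthand $\psi(u)=D+u^{D+1}-(D+1)u$ and $\alpha(\rho)=(DB_d(\rho))^{1/(D+1)}$, so that $H_d(\rho)=\psi(\alpha(\rho))/(D(1-\rho)^2)$. A Taylor expansion at $u=1$ gives $\psi(u)=\tfrac{D(D+1)}{2}(u-1)^2+O((u-1)^3)$, and the limits $\lim_{\rho\to1}B_d=1/D$, $\lim_{\rho\to1}B_d'=(D+1)/D$ from Lemma \ref{LemMain} yield $\alpha(\rho)=1+(\rho-1)+O((\rho-1)^2)$. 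Combining these produces $\lim_{\rho\to1^-}H_d(\rho)=(D+1)/2=d/4$.

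The hardest step is to prove that $H_d$ is strictly increasing on $(0,1)$ when $D>1$. Differentiating the identity $D(1-\rho)^2H_d(\rho)=\psi(\alpha(\rho))$ gives
\[
D(1-\rho)^3\, H_d'(\rho)=\psi'(\alpha(\rho))\,\alpha'(\rho)\,(1-\rho)+2\psi(\alpha(\rho)),
\]
so $H_d'>0$ is equivalent to the positivity of the right-hand side. Now $\psi'(\alpha)=(D+1)(\alpha^D-1)<0$ (since $\alpha<1$) and $\alpha'>0$ (since $B_d$ is increasing by Lemma \ref{LemMain}), so the first summand is negative while the second is positive; moreover, a direct expansion shows that at $\rho\to1$ the two contributions cancel to leading order. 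Consequently the sign cannot be read off the dominant terms alone and must be extracted from the finer behavior of $B_d$. The plan is to rewrite everything through the relation $\alpha^{D+1}=DB_d$, insert the explicit formula (\ref{eqBdnew}) for $B_d'/B_d$, and then invoke the sign information $B_d'''>0$ for $D>1$ from Lemma \ref{LemMain} to produce the required one-sided estimate; this will most likely be organized as an l'Hôpital-type monotonicity rule applied to the ratio $\psi(\alpha(\rho))/(1-\rho)^2$, which reduces the inequality to a strict convexity statement for $\psi\circ\alpha$ in the variable $(1-\rho)^2$. Bookkeeping all the cross-terms without losing the sign is what I expect to be the main obstacle.

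Once monotonicity is in hand, $H_d(0)=1\le H_d(\rho)\le\lim_{\rho\to1^-}H_d(\rho)=d/4$ for all $\rho\in(0,1)$, and multiplication by $(R-r)^2/(2d)$ via (\ref{eqCformula}) yields the two-sided estimate (\ref{qcestimate8}).
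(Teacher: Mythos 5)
Your treatment of the $d=4$ case and of the two endpoint values $H_d(0)=1$ and $\lim_{\rho\to 1^-}H_d(\rho)=(D+1)/2=d/4$ is correct and matches the paper's computation (the paper obtains the limit at $\rho=1$ by two applications of l'H\^opital's rule; your Taylor expansion of $\psi(u)=D+u^{D+1}-(D+1)u$ about $u=1$ is an equivalent route). You have also correctly identified that the whole theorem hinges on the strict monotonicity of $H_d$ for $D>1$, and that the sign of $H_d'$ cannot be read off termwise, because the negative contribution $\psi'(\alpha)\,\alpha'\,(1-\rho)$ and the positive contribution $2\psi(\alpha)$ cancel to leading order as $\rho\to1$.

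The monotonicity step, however --- the only genuinely hard part of the theorem --- is not proved in your proposal; it is only announced as a plan. Your proposed reduction (an l'H\^opital-type monotone rule applied to $\psi(\alpha(\rho))/(1-\rho)^2$) would, if carried out, land exactly where the paper's argument lands: since $\psi\circ\alpha=D\,G_d$ and both numerator and denominator vanish to second order at $\rho=1$, everything reduces to showing $G_d'''>0$ on $(0,1)$, which is what the paper does via the auxiliary function $N_d=G_d'\cdot(1-\rho)+2G_d$ with $N_d''=G_d'''\cdot(1-\rho)$. But invoking $B_d'''>0$ is nowhere near sufficient: differentiating $G_d'=B_d'\bigl(1-(DB_d)^{-D/(D+1)}\bigr)$ twice produces several terms of competing signs involving $B_d$, $B_d'$ and $B_d''$, and the paper must (i) isolate the $B_d'''$ contribution, (ii) reduce the remaining expression, via the logarithmic-derivative formula (\ref{eqBdnew}), to the positivity of an explicit linear combination $a_d(x)$ of twelve power functions $1,x,x^{D},\dots,x^{3D+3}$, and (iii) prove $a_d>0$ on $(0,1)$ by a delicate zero-counting argument (an order-$7$ zero at $x=1$, the reflection identity $-x^{3D+3}a_d(1/x)=a_d(x)$, and a Descartes-type bound on the number of positive zeros coming from the nine sign changes of the coefficients). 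None of this is present in your write-up, and it is not mere bookkeeping: without step (iii), or some substitute for it, the claim that $H_d$ is increasing for $d>4$ --- and hence the upper bound in (\ref{qcestimate8}) --- remains unproven.
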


\begin{proof}
1. For $d=4$ we have $D=\left(  d-2\right)  /2=1$ and
\[
H_{4}\left(  \rho\right)  =\frac{1+\rho^{2}\frac{1-\rho^{2}}{1-\rho^{2}%
}-2\left(  \rho^{2}\frac{1-\rho^{2}}{1-\rho^{2}}\right)  ^{\frac{1}{2}}%
}{\left(  1-\rho\right)  ^{2}}=\frac{\rho^{2}+1-2\rho}{\left(  1-\rho\right)
^{2}}=1.
\]
2. Assume now $d>4$, so $D>1.$We analyse the function
\[
H_{d}\left(  \rho\right)  =\frac{G_{d}\left(  \rho\right)  }{\left(
1-\rho\right)  ^{2}}\text{ with }G_{d}\left(  \rho\right)  :=1+B_{d}\left(
\rho\right)  -\frac{D+1}{D}\left(  DB_{d}\left(  \rho\right)  \right)
^{\frac{1}{D+1}}.
\]
For $\rho\in\left(  0,1\right)  $ we compute
\[
H_{d}^{\prime}\left(  \rho\right)  =\frac{G_{d}^{\prime}\left(  \rho\right)
\left(  1-\rho\right)  +2G_{d}\left(  \rho\right)  }{1-\rho}.
\]
Consider the numerator of $H_{d}^{\prime}\left(  \rho\right)  $ defined by
\begin{equation}
N_{d}\left(  \rho\right)  :=G_{d}^{\prime}\left(  \rho\right)  \left(
1-\rho\right)  +2G_{d}\left(  \rho\right)  . \label{eqDefN}%
\end{equation}
For $D>1$ we will show that $N_{d}\left(  \rho\right)  $ is positive on
$\left(  0,1\right)  .$\ This clearly implies that $H_{d}$ is increasing for
$D>1.$ The estimate (\ref{qcestimate8}) is then a simple consequence of the
monotonocity and
\begin{align*}
\lim_{\rho\rightarrow1}H_{d}\left(  \rho\right)   &  =\lim_{\rho\rightarrow
1}\frac{B_{D}\left(  \rho\right)  +1-\frac{D+1}{D}\left(  DB_{D}\left(
\rho\right)  \right)  ^{\frac{1}{D+1}}}{\left(  1-\rho\right)  ^{2}}\\
&  =\lim_{\rho\rightarrow1}B_{D}^{\prime}\left(  \rho\right)  \lim
_{\rho\rightarrow1}\frac{1-\left(  DB_{D}\left(  \rho\right)  \right)
^{\frac{1}{D+1}-1}}{\left(  -2\right)  \left(  1-\rho\right)  }=\frac{1}%
{2}\left(  D+1\right)  .
\end{align*}
where we used the rule of L'Hospital for $\rho\rightarrow1$ twice, and that
$\frac{1}{2d}\frac{1}{2}\left(  D+1\right)  =\frac{1}{8}.$

3. It remains to show that $N_{d}\left(  \rho\right)  $ is positive on
$\left(  0,1\right)  .$ A short computation shows that for $\rho\in\left(
0,1\right)  $
\begin{equation}
G_{d}^{\prime}=B_{d}^{\prime}-\left(  DB_{d}\right)  ^{\frac{1}{D+1}-1}%
B_{d}^{\prime}=B_{d}^{\prime}\left(  1-\left(  DB_{d}\right)  ^{\frac{1}%
{D+1}-1}\right)  . \label{eqGder}%
\end{equation}
Lemma \ref{LemMain} shows that $B_{d}^{\prime}\left(  \rho\right)  $ converges
for $\rho\rightarrow1$ and $\lim_{\rho\rightarrow1}DB_{d}\left(  \rho\right)
=1,$ hence $G_{d}^{\prime}\left(  \rho\right)  \rightarrow0$ for
$\rho\rightarrow1.$ For $\rho\in\left(  0,1\right)  $ we have
\begin{align*}
N_{d}^{\prime}\left(  \rho\right)   &  =G_{d}^{\prime\prime}\left(
\rho\right)  \left(  1-\rho\right)  +G_{d}^{\prime}\left(  \rho\right) \\
N_{d}^{\prime\prime}\left(  \rho\right)   &  =G_{d}^{\prime\prime\prime
}\left(  \rho\right)  \left(  1-\rho\right)  .
\end{align*}
If we can show that $G_{d}^{\prime\prime\prime}\left(  \rho\right)  >0$ for
all $\rho\in\left(  0,1\right)  $ we see that $N_{d}^{\prime\prime}\left(
\rho\right)  >0$ for all $\rho\in\left(  0,1\right)  ,$ hence $N_{d}^{\prime
}\left(  \rho\right)  $ is increasing on $\left(  0,1\right)  .$ Further
\[
\lim_{\rho\rightarrow1}N_{d}^{\prime}\left(  \rho\right)  =\lim_{\rho
\rightarrow1}G_{d}^{\prime\prime}\left(  \rho\right)  \left(  1-\rho\right)
+\lim_{\rho\rightarrow1}G_{d}^{\prime}\left(  \rho\right)  =0
\]
where we use the fact that $\lim_{\rho\rightarrow1}G_{d}^{\prime\prime}\left(
\rho\right)  $ exists (see below for justification). It follows that
$N_{d}^{\prime}\left(  \rho\right)  \leq0$ for all $\rho\in\left(  0,1\right)
,$ so $N_{d}$ is decreasing. It follows that $N_{d}\left(  \rho\right)  \geq
N_{d}\left(  1\right)  =0.$

4. It remains to show that for $D>1$
\[
G_{d}^{\prime\prime\prime}\text{ is positive on }\left(  0,1\right)  \text{
and }\lim_{\rho\rightarrow1}G_{d}^{\prime\prime}\left(  \rho\right)  \text{
exists. }%
\]
We differentiate (\ref{eqGder}) and using that $\frac{1}{D+1}-1=-\frac{D}%
{D+1}$ we obtain
\[
G_{d}^{\prime\prime}=B_{d}^{\prime\prime}-B_{d}^{\prime\prime}\left(
DB_{d}\right)  ^{\frac{1}{D+1}-1}+\frac{D^{2}}{D+1}B_{d}^{\prime2}\left(
DB_{d}\right)  ^{\frac{1}{D+1}-2}.
\]
Lemma \ref{LemMain} shows that $\lim_{\rho\rightarrow1}B_{d}^{\left(
j\right)  }\left(  \rho\right)  $ exists for $j=0,1,2,$ hence $\lim
_{\rho\rightarrow1}G_{d}^{\prime\prime}\left(  \rho\right)  $ exists. Further
\begin{align*}
G_{d}^{\prime\prime\prime}  &  =B_{d}^{\prime\prime\prime}-B_{d}^{\prime
\prime\prime}\left(  DB_{d}\right)  ^{\frac{1}{D+1}-1}+3B_{d}^{\prime\prime
}B_{d}^{\prime}\frac{D^{2}}{D+1}\left(  DB_{d}\right)  ^{\frac{1}{D+1}-2}\\
&  +\frac{D^{3}}{D+1}B_{d}^{\prime3}\left(  \frac{1}{D+1}-2\right)  \left(
DB_{d}\right)  ^{\frac{1}{D+1}-3}.
\end{align*}
We multiply $G_{d}^{\prime\prime\prime}$ with $\left(  DB_{d}\right)
^{3-\frac{1}{D+1}}$ and we obtain
\[
\left(  DB_{d}\right)  ^{3-\frac{1}{D+1}}G_{d}^{\prime\prime\prime}=\left(
DB_{d}\right)  ^{3-\frac{1}{D+1}}B_{d}^{\prime\prime\prime}+\widetilde{A}%
_{d}\left(  \rho\right)
\]
with
\[
\widetilde{A}_{d}\left(  \rho\right)  =-D^{2}B_{d}^{\prime\prime\prime}%
B_{d}^{2}+3B_{d}^{\prime\prime}B_{d}^{\prime}B_{d}\frac{D^{3}}{D+1}%
-\frac{D^{3}\left(  2D+1\right)  }{\left(  D+1\right)  ^{2}}B_{d}^{\prime3}.
\]
For $D>1$ Lemma \ref{LemMain} shows that $B_{d}^{\prime\prime\prime}\geq0$ on
$\left(  0,1\right)  .$ Hence it suffices to show that $\widetilde{A}%
_{d}\left(  \rho\right)  $ is positive -- which is a much easier function that
$G_{d}^{\prime\prime\prime}.$

5. By dividing $\widetilde{A}_{d}\left(  \rho\right)  $ by $D^{2}$ and
multiplying by $\left(  D+1\right)  $ it remains to show that
\[
A_{d}:=-\left(  D+1\right)  B_{d}^{\prime\prime\prime}B_{d}^{2}+3DB_{d}%
^{\prime\prime}B_{d}^{\prime}B_{d}-DB_{d}^{\prime3}\frac{2D+1}{D+1}%
\]
is positive. According to Lemma \ref{LemMain} we have
\[
\frac{B_{d}^{\prime}\left(  \rho\right)  }{B_{d}\left(  \rho\right)  }%
=\frac{2\left(  D-\rho^{2}+\rho^{2D+2}-\rho^{2}D\right)  }{\rho\left(
1-\rho^{2D}\right)  \left(  1-\rho^{2}\right)  }.
\]
Let $p$ be the numerator and $q$ the denominator. Then $B_{d}^{\prime}\left(
\rho\right)  =B_{d}\left(  \rho\right)  \frac{p}{q}$ and
\begin{align*}
B_{d}^{\prime\prime}  &  =B_{d}^{\prime}\frac{p}{q}+B_{d}\frac{d}{d\rho}%
\frac{p}{q}=B_{d}\left(  \frac{p^{2}}{q^{2}}+\frac{d}{d\rho}\frac{p}{q}\right)
\\
B_{d}^{\prime\prime\prime}  &  =B_{d}^{\prime}\left(  \frac{p^{2}}{q^{2}%
}+\frac{d}{d\rho}\frac{p}{q}\right)  +B_{d}\frac{d}{d\rho}\left(  \left(
\frac{p}{q}\right)  ^{2}+\frac{d}{d\rho}\frac{p}{q}\right) \\
&  =B_{d}\left(  \frac{p^{3}}{q^{3}}+3\frac{p}{q}\frac{d}{d\rho}\frac{p}%
{q}+\frac{d^{2}}{d\rho^{2}}\frac{p}{q}\right)  .
\end{align*}
Replace these expressions in the definition of $A_{d}\left(  \rho\right)  $
and factor out $B_{d}^{3}$. Then we have to show that
\begin{align*}
\widehat{A}_{d}\left(  \rho\right)   &  :=-\left(  D+1\right)  \left(
\frac{p^{3}}{q^{3}}+3\frac{p}{q}\frac{d}{d\rho}\frac{p}{q}+\frac{d^{2}}%
{d\rho^{2}}\frac{p}{q}\right) \\
&  +3D\left(  \frac{p^{2}}{q^{2}}+\frac{d}{d\rho}\frac{p}{q}\right)  \frac
{p}{q}-D\frac{p^{3}}{q^{3}}\frac{2D+1}{D+1}%
\end{align*}
is positive on $\left(  0,1\right)  .$ Simplification gives
\[
\widehat{A}_{d}\left(  \rho\right)  =-\left(  D+1\right)  \left(  \frac{d^{2}%
}{d\rho^{2}}\frac{p}{q}\right)  -3\frac{p}{q}\frac{d}{d\rho}\frac{p}{q}%
-\frac{1}{D+1}\frac{p^{3}}{q^{3}}\geq0
\]
A calculation (e.g with Maple) gives that $\ $%
\[
\widehat{A}_{d}\left(  \rho\right)  =\frac{Da_{d}\left(  \rho^{2}\right)
}{\rho^{3}\left(  D+1\right)  \left(  \rho^{2D}-1\right)  ^{3}\left(  \rho
^{2}-1\right)  ^{3}}%
\]
where $a_{d}\left(  x\right)  $ is equal to
\begin{align*}
&  a_{d}\left(  x\right) \\
=  &  \allowbreak4\left(  D-1\right)  +\allowbreak12x\left(  D+1\right) \\
&  -4x^{D}\left(  2D+1\right)  \left(  D+1\right)  \left(  2D+D^{2}-2\right)
+12x^{D+1}\left(  -4D+5D^{2}+7D^{3}+2D^{4}-2\right)  \allowbreak\\
&  -12x^{D+2}\left(  D+1\right)  \left(  2D+5D^{2}+2D^{3}+1\right)
+\allowbreak4x^{D+3}\left(  2D+1\right)  \left(  D+1\right)  ^{3}\allowbreak\\
&  -4x^{2D}\left(  2D+1\right)  \left(  D+1\right)  ^{3}+12x^{2D+1}\left(
D+1\right)  \left(  2D+5D^{2}+2D^{3}+1\right) \\
&  -12x^{2D+2}\left(  -4D+5D^{2}+7D^{3}+2D^{4}-2\right)  +\allowbreak
4x^{2D+3}\left(  2D+1\right)  \left(  D+1\right)  \left(  2D+D^{2}-2\right) \\
&  -12x^{3D+2}\left(  D+1\right)  -4x^{3D+3}\left(  D-1\right)  .
\end{align*}
Note that $a_{d}\left(  x\right)  $ is a linear combination of $12$ power
functions. It can be shown that the function $a_{d}\left(  x\right)  $ has a
zero of order $7$ at $x=1.$ Further
\[
\allowbreak\lim_{x\rightarrow1}\frac{d^{7}}{dx^{7}}a_{d}\left(  x\right)
=-168D^{3}\left(  D-1\right)  \left(  2D+1\right)  \left(  D+2\right)  \left(
D+1\right)  ^{3}<0
\]
Note that $a_{d}\left(  0\right)  >0$. If $a_{d}\left(  x_{0}\right)  <0$ for
some $x_{0}\in\left(  0,1\right)  $ we see that $a_{d}$ has an a zero in
$\left(  0,x_{0}\right)  $. Since $a_{d}\left(  x\right)  >0$ for $x<1$ close
enough to $1$ we see that $a_{d}$ has also a zero in $\left(  x_{0},1\right)
.$ Due to the symmetry of the function
\[
-x^{3D+3}a_{d}\left(  \frac{1}{x}\right)  =a_{d}\left(  x\right)
\]
we infer that the function $a_{d}$ has at least $11$ positive zeros. This is
impossible since the coefficients of the function $a_{d}$ have only $9$
changes of sign.
\end{proof}

\bigskip ACKNOWLEDGEMENTS \medskip The work of H. Render and Ts. Tsachev was
funded under project KP-06-N32-8, while the work of O. Kounchev was funded
under project KP-06-N42-2 with Bulgarian NSF.

\end{document}